\def\figurename{Figure} 
\renewcommand{\fnum@figure}[1]{\figurename~\thefigure.}
\def\tablename{Table} 
\renewcommand{\fnum@table}[1]{\tablename~\thetable.}
\newtheorem{theorem}{Theorem}[section]
\newtheorem{proposition}[theorem]{Proposition}
\theoremstyle{definition}
\theoremstyle{remark}
\numberwithin{equation}{section}
\begin{document}

\title{Existence and multiplicity of solutions for a class of quasilinear problems in Orlicz-Sobolev spaces}\bigskip

\author{\bfseries\scshape Karima Ait-Mahiout\thanks{e-mail: karima\_ait@hotmail.fr} \, and \, Claudianor O. Alves\thanks{C.O. Alves was partially supported by CNPq/Brazil  304036/2013-7  and INCT-MAT, e-mail:  coalves@mat.ufcg.edu.br} }

\date{}
\maketitle
\thispagestyle{empty} \setcounter{page}{1}

\begin{abstract} \noindent
This work is concerned with the existence and multiplicity of solutions for the following class of quasilinear problems
$$
-\Delta_{\Phi}u+\phi(|u|)u=f(u)~\text{in} ~\Omega_{\lambda}, u(x)>0 ~\text{in}~\Omega_{\lambda}, u=0~ \mbox{on} ~\partial\Omega_{\lambda},
$$
where $\Phi(t)=\int_0^{|t|} \phi(s) s \, ds $ is an $N-$function, $\Delta_{\Phi}$ is the $\Phi-$Laplacian operator, \linebreak $\Omega_{\lambda}=\lambda \Omega,$ $\Omega$ is a smooth bounded domain in $\mathbb{R}^N,$ $N \geq 2$, $\lambda$ is a positive parameter and $f: \mathbb{R}\rightarrow \mathbb{R}$ is a continuous function. Here, we use variational methods to get multiplicity of solutions by using of Lusternik-Schnirelmann category of ${\Omega}$ in itself.
\end{abstract}

\vspace{.08in} \noindent \textbf{Keywords}: Variational methods, Quasilinear problems, Orlicz-Sobolev space, Positive solutions.

\allowdisplaybreaks

\section{Introduction}

In this paper, we study the existence of multiple solutions for the following class of quasilinear problems 
\begin{equation}\tag{$P_\lambda$}\label{P}
\begin{cases}
 -\Delta_{\Phi}u+\phi(|u|) u= f(u),~ ~\text{in}~ \Omega_{\lambda}\\ 
u>0, \quad \text{in}~ \Omega_{\lambda},\\ 
u=0, \quad \mbox{on} \quad \partial \Omega_\lambda,
\end{cases}
 \end{equation}
where $\Omega_\lambda = \lambda \Omega$, $\Omega \subset \mathbb{R}^N$ is a smooth bounded domain, $N\geq 2$, $\lambda$ is a positive parameter and $\Delta_{\Phi}u=div(\phi(|\nabla u|)\nabla u),$ where $\Phi(t)=\int_0^{|t|} \phi(s) ds,$ is the $\Phi-$Laplacian. We would like to detach that this type of operator appears in a lot of physical applications, such as:

\noindent {\it  Nonlinear Elasticity:} \, $ \Phi(t) = (1+t^{2})^{\alpha}-1, \alpha \in (1, \frac{N}{N-2})$, \\
\noindent { \it  Plasticity:} \, $ \Phi(t) = t^{p}\ln(1+t), 1< \frac{-1+\sqrt{1+4N}}{2}<p<N-1, N\geq 3$, \\
\noindent {\it  Non-Newtonian Fluid:  } \, $\Phi(t) = \frac{1}{p}|t|^{p}$ for $p>1$,\\
\noindent { \it  Plasma Physics: } \, $\Phi(t) = \frac{1}{p}|t|^{p} + \frac{1}{q}|t|^{q}$ where $1<p<q<N$ with $q \in (p, p^{*}).$

The reader can find more details about this subject in \cite{Db}, \cite{F1}, \cite{FN2} and their references.

In what follows, the function $\phi:[0,+\infty[\rightarrow [0,+\infty[$ is a $C^1-$ function which satisfies: 
   
\noindent $(\phi_1)$ $\phi(t)>0$ and 
$(\phi(t)t)'>0, ~\text{for all}~ t>0$.

\noindent $(\phi_2)$ There exist~$ l,m\in(1,N)$~ such that : 
$$ 
l\leq \frac{\phi(t)t^2}{\Phi(t)}\leq m, \quad \forall t\neq0, 
$$
\noindent where $l\leq m\leq l^{\star}=\frac{Nl}{N-l}.$ \\
\noindent $(\phi_3)~ \text{The function}~ \displaystyle\frac{\phi(t)}{t^{m-2}}~ \text{is nonincreasing in}~ (0,+\infty).$ 

\noindent $(\phi_4)$ The function $\phi$ is monotone.

\noindent $(\phi_5)$ There exists a constant $c>0$ such that  
$$
|\phi'(t)t|\leq c\phi(t), \quad \forall t\in [0,+\infty).
$$

We say that $\Phi\in \mathcal{C}_m$ if 
$$
\Phi(t)\geq |t|^m, ~\forall t\in \mathbb{R}.
$$

Moreover, we denote by $\gamma$ the following real number 
$$
\gamma=\begin{cases} m, ~~\text{if}~ \Phi\in \mathcal{C}_m,\\ l,~~~~\text{if}~ \Phi\notin \mathcal{C}_m.\end{cases}
$$

Here, we would like to mention that the functions $\phi$ associated with each N-function cited in this introduction, fulfill the conditions $(\phi_1)$-$(\phi_5)$.

Related to the function $f:\mathbb{R}\rightarrow \mathbb{R}$, we assume that it is a $C^1-$ function which satisfies the following conditions: \\ 
\mbox{} \quad $(f_1)$ There are functions $r,b:[0,+\infty)\rightarrow [0,+\infty)$ such that 
$$
\limsup_{|t|\rightarrow0}\frac{f'(t)}{(r(|t|)|t|)'}=0 \quad \text{and} \quad  \limsup_{|t|\rightarrow+\infty}\frac{|f'(t)|}{(b(|t|)|t|)'}<+\infty.
$$ 

There exists $\theta>m$ such that 

$(f_2)$\hspace{2cm}$0<\theta F(t)=\theta\int_{0}^t f(s) ds\leq t f(t), \quad \forall t\in \mathbb{R}\setminus\{0\}.$

$(f_3)$ The function $\displaystyle\frac{f(t)}{t^{m-1}}$ is strictly  increasing for $t>0.$

The functions $r$ and $b$ are $C^1-$functions which satisfy:

$(b_1)$ $b$ is increasing.

$(b_2)$ There exists a constant $\widehat{c}>0$ such that $$|b'(t)t|\leq \widehat{c} b(t), t\geq0.$$

$(b_3)$ There exist positive constants $b_1,b_2\in (1,\gamma^*)$ such that  
$$
b_1\leq \frac{b(t)t^2}{B(t)}\leq b_2~~\forall t\neq0, ~\text{where}~ B(t)=\int_0^{|t|} b(s) s ds ~\text{and} ~\gamma^*=\frac{N\gamma}{N-\gamma}.
$$
$(b_4)$ The function $B$ satisfies
$$
\limsup_{t\rightarrow 0}\frac{B(t)}{\Phi(t)}< +\infty \quad \mbox{and} \quad \limsup_{|t|\rightarrow +\infty}\frac{B(t)}{\Phi_{*}(t)}=0.
$$

\noindent $(r_1)$ $r$ is increasing.

\noindent $(r_2)$ There exists a constant $\overline{c}>0$ such that $$|r'(t)t|\leq \overline{c}r(t); \forall t\geq0.$$

\noindent $(r_3)$  There exist positive constants $r_1$ and $r_2$  such that $$r_1\leq \frac{r(t)t^2}{R(t)}\leq r_2, \forall t\neq 0, ~\text{where}~ R(t)=\int_0^{|t|} r(s) ds.$$

\noindent $(r_4)$ The function $R$ satisfies 
$$
\limsup_{t\rightarrow0}\frac{R(t)}{\Phi(t)}<+\infty~\text{and}~ \limsup_{|t|\rightarrow+\infty}\frac{R(t)}{\Phi_{*}(t)}=0,
$$
where $\Phi_{*}$ is the Sobolev conjugate function, which is defined by inverse function of
\[
G_{\Phi}(t) = \int_{0}^{t}\frac{\Phi^{-1}(s)}{s^{1+\frac{1}{N}}}ds.
\]

Hereafter, we use variational methods to get multiplicity of positive solutions for (\ref{P}), where the main idea is looking for critical points of the energy functional $I_\lambda: W_0^{1,\Phi}(\Omega_{\lambda})\rightarrow \mathbb{R}$ given by: 
$$
I_\lambda(u)=\int_{\Omega_{\lambda}} \Phi(|\nabla u|)\,dx+\int_{\Omega_{\lambda}} \Phi(|u|)\,dx-\int_{\Omega_{\lambda}} F(u)\,dx. 
$$
Using standard arguments, we know that $I_\lambda \in C^{1}(W_0^{1,\Phi}(\Omega_{\lambda}), \mathbb{R})$ with
$$
I_\lambda'(u)v=\int_{\Omega_{\lambda}} \phi(|\nabla u|)\nabla u \nabla v\, dx+\int_{\Omega_{\lambda}} \phi(|u|)uv\,dx-\int_{\Omega_{\lambda}} f(u)v\,dx, \quad \forall u,v \in W_0^{1,\Phi}(\Omega_{\lambda}).
$$ 
Hence, critical points of $I_\lambda$ are weak solutions of (\ref{P}). Hereafter, we denote by $c_{\lambda}$ the mountain pass level of  $I_\lambda$ and by $\mathcal{M}_\lambda$ the  set
$$
\mathcal{M}_\lambda=\{u \in W_0^{1,\Phi}(\Omega_\lambda) \setminus \{0\}\,:\,I'_{\lambda}(u)u=0\},
$$ 
which is the Nehari manifold associated with $I_\lambda$.

In the literature there are some works where the authors showed multiplicity of solutions for some related problems to (\ref{P}) by using of Lusternick- Schnirelman category of ${\Omega}$ in itself, denoted by $cat({\Omega})$, see for example for the case $\phi(t)=1$, Benci \& Cerami \cite{BC1, BC2, BC3}, Clap \& Ding \cite{CD}, Rey \cite{Rey} and Bahri \& Coron \cite{BaC}. For $\phi(t)=|t|^{p-2}$, with
$p\geq 2$, we cite the papers by Alves \cite{2005}, Alves \& Ding \cite{alvesD2} and references therein. Moreover, the reader can find in \cite{FN2}, \cite{FN3}, \cite{FN4}, \cite{MR1}, \cite{MR2}, \cite{MRep}, \cite{santos1}, \cite{RR} and \cite{TF}  recent results for some related problems to (\ref{P}) for $\lambda=1$. 

We would like to point out that if $Y$ is a closed subset of a topological space $X$, the Lusternik-Schnirelman category $cat_{X}(Y)$ is the least number of closed and contractible sets in $X$ which cover $Y$. If $X=Y$, we will use the notation $cat(X)$.

Motivated by results found in \cite{2005} and \cite{BC1}, in the present paper we have proved that the main results obtained in the mentioned papers also hold for a large class of ${\Phi}-$Laplacian operators, for example our main result can be used to prove multiple solutions for the following quasilinear problems:\\

\noindent {\bf Problem 1:}
$$
\left\{
	\begin{array}{l}
	- \mbox{div}\Big( \big(p|\nabla u|^{p-2}\ln(1+|\nabla u|) + \frac{|\nabla u|^{p-1}}{|\nabla u|+1} \big) \nabla u\Big) +
	\Big( p| u|^{p-2}\ln(1+| u|) + \frac{| u|^{p-1}}{| u|+1} \Big)u \, =\, f(u), \mbox{in} \quad \Omega_\lambda, \\
	u(x)>0, \quad \mbox{in} \quad \Omega_\lambda,
	\nonumber\\
	u=0, \quad \mbox{on} \quad \partial \Omega_\lambda, 
	\end{array}
	\right. \nonumber
$$
for $1< \frac{-1+\sqrt{1+4N}}{2}<p<N-1$ and $N\geq 3$. \\

\noindent {\bf Problem 2:}
$$
	\left\{
	\begin{array}{l}
	- 2\alpha\mbox{div}\big((1+|\nabla u|^2)^{\alpha -1}\nabla u\big) + 2\alpha (1+|u|^2)^{\alpha -1}u \, =\, f(u), \mbox{in} \quad \Omega_\lambda, \\
	u(x)>0, \quad \mbox{in} \quad \Omega_\lambda,
	\nonumber\\
	u=0, \quad \mbox{on} \quad \partial \Omega_\lambda,
	\end{array}
	\right.\nonumber 
$$
for $\alpha \in (1, \frac{N}{N-2})$ and $N >2$.

\vspace{0.5 cm}

Our main result is the following

\begin{theorem} \label{T1}
Assume $(\phi_1)-(\phi_5),(f_1)-(f_3), (r_1)-(r_4)$ and $(b_1)-(b_4)$. Then, there exists $\lambda^*> 0$ such that for $\lambda \geq \lambda^*$ problem (\ref{P}) has at least $cat(\Omega)$ of positive solutions. 	

\end{theorem}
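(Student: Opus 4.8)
The plan is to follow the now-classical Benci--Cerami "photography" method, adapted to the Orlicz--Sobolev setting. First I would work on the limit problem at infinity, namely
$$
-\Delta_{\Phi}u+\phi(|u|)u=f(u)\quad\text{in }\mathbb{R}^N,
$$
with energy functional $I_{\infty}$ on $W^{1,\Phi}(\mathbb{R}^N)$, Nehari manifold $\mathcal{N}_{\infty}$ and least energy level $c_{\infty}$. Using $(\phi_1)$--$(\phi_5)$, $(f_1)$--$(f_3)$ and the growth conditions $(r_1)$--$(r_4)$, $(b_1)$--$(b_4)$ one checks the mountain-pass geometry and a Palais--Smale compactness analysis (the $(b_4)$, $(r_4)$ hypotheses force subcritical behavior relative to $\Phi_{*}$, so Sobolev embeddings are compact on bounded domains and a concentration-compactness argument recovers compactness of the relevant minimizing/PS sequences in $\mathbb{R}^N$). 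One shows $c_{\infty}$ is attained by a positive, radially symmetric ground state $w$, and that $w$ decays at infinity. The condition $(f_3)$ together with $(\phi_3)$ gives the Nehari manifold the usual good structure: for each $u\neq 0$ there is a unique $t_u>0$ with $t_u u\in\mathcal{N}_{\infty}$, and the map $u\mapsto t_u$ is continuous; similarly for $I_\lambda$ on $\mathcal{M}_\lambda$.

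Next I would set up the two key maps relating the topology of $\Omega$ to the sublevel sets of $I_\lambda$. Fix a small $\delta>0$ so that $\Omega_\delta^{-}=\{x\in\Omega:\operatorname{dist}(x,\partial\Omega)\ge\delta\}$ and $\Omega_\delta^{+}=\{x\in\mathbb{R}^N:\operatorname{dist}(x,\Omega)\le\delta\}$ are homotopy-equivalent to $\Omega$. One constructs, for $\lambda$ large, a "barycenter-type" map
$$
\beta_\lambda:\mathcal{M}_\lambda\cap I_\lambda^{c_\infty+h(\lambda)}\longrightarrow \Omega_\delta^{+},
$$
where $h(\lambda)\to 0$, given by a suitably normalized center of mass built from $\Phi(|\nabla u|)+\Phi(|u|)$ (or from $F(u)$); the point is that functions with energy close to $c_\infty$ must concentrate near a single point of $\overline{\Omega}$, which is the heart of a concentration-compactness / splitting lemma for the $\Phi$-energy on the expanding domain $\Omega_\lambda=\lambda\Omega$. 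In the opposite direction, for each $y\in\Omega_\delta^{-}$ one transplants a truncation of the ground state $w$ to a neighborhood of $\lambda y$ inside $\Omega_\lambda$, projects it onto $\mathcal{M}_\lambda$, and obtains a map $\Psi_\lambda:\Omega_\delta^{-}\to\mathcal{M}_\lambda$ with $\sup_{y}I_\lambda(\Psi_\lambda(y))\le c_\infty+h(\lambda)$ for $\lambda$ large (here one uses that $\lambda\Omega$ exhausts $\mathbb{R}^N$ so the transplanted ground state is an almost-optimal test function). Then $\beta_\lambda\circ\Psi_\lambda$ is homotopic to the inclusion $\Omega_\delta^{-}\hookrightarrow\Omega_\delta^{+}$, which by the standard Lusternik--Schnirelmann argument forces
$$
\operatorname{cat}\!\big(\mathcal{M}_\lambda\cap I_\lambda^{c_\infty+h(\lambda)}\big)\ \ge\ \operatorname{cat}(\Omega).
$$

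Finally I would run the deformation argument on the Nehari manifold: since $I_\lambda$ restricted to $\mathcal{M}_\lambda$ satisfies the Palais--Smale condition below $c_\infty+h(\lambda)$ (again using the compactness analysis, with $\lambda$ large to exclude escaping-mass and concentration on the boundary), Lusternik--Schnirelmann theory yields at least $\operatorname{cat}(\mathcal{M}_\lambda\cap I_\lambda^{c_\infty+h(\lambda)})\ge\operatorname{cat}(\Omega)$ critical points of $I_\lambda|_{\mathcal{M}_\lambda}$, hence of $I_\lambda$; positivity follows by testing with $u^{-}$ and the strong maximum principle (working, as usual, with $f$ extended by $0$ on $(-\infty,0)$). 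The main obstacle — and the only place where genuinely new work beyond the $p$-Laplacian case is needed — is the compactness/splitting analysis for the $\Phi$-energy on the unbounded, $\lambda$-dependent domains: one must prove a Lions-type vanishing/dichotomy lemma and the crucial "energy of a sum $\ge$ sum of energies minus vanishing error" estimate in Orlicz--Sobolev spaces, where homogeneity is lost and one must instead exploit $(\phi_2)$ (the $\Delta_2$/nabla_2 bounds $l\le\phi t^2/\Phi\le m$), $(\phi_3)$, $(\phi_5)$ and the analogous $(b_i)$, $(r_i)$ conditions to control the error terms; the $\Phi\in\mathcal{C}_m$ dichotomy and the parameter $\gamma$ are precisely what make these estimates uniform enough to carry the argument through.
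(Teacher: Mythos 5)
Your proposal follows essentially the same route as the paper: the Benci--Cerami ``photography'' method with a barycenter map built from $\Phi(|\nabla u|)$, a transplanted ground state of a limit problem giving a map from $\Omega_-$ into a low sublevel set of $I_\lambda|_{\mathcal{M}_\lambda}$, and Lusternik--Schnirelmann theory on the Nehari manifold together with the Palais--Smale condition. The only cosmetic differences are that the paper transplants the radially symmetric ground state of the ball problem on $B_{\lambda r}$ (so the relevant sublevel is exactly $I_\lambda^{b_\lambda}$ with $b_\lambda\to c_\infty$) and controls the barycenter not by a direct splitting lemma but by comparison with the infimum $a(R,r,\lambda,x)$ of the energy over Nehari-type sets on annuli $A_{\lambda R,\lambda r,x}$, showing $\liminf_{\lambda\to\infty}a(R,r,\lambda)>c_\infty$.
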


The plan of the paper is as follows: In Section 2, we will fix some notations about Orlicz-Sobolev spaces and prove a compactness result for the energy functional associated with limit problem,  see Theorem \ref{31}. In Section 3, we study the behavior of some minimax levels and prove our main result. 

\vspace{0.5 cm}

\noindent \textbf{Notation:} In this paper, we use the following
notations:
\begin{itemize}
	\item  The usual norm in $W^{1,\Phi}(\mathbb{R}^N)$ will be denoted by
	$\|\;\;\;\|$.

	\item   $C$ denotes (possible different) any positive constant.
	
	\item   $B_{R}(z)$ denotes the open ball with center  $z$ and
	radius $R$ in $\mathbb{R}^N$.
	
	\item Since we are interested by finding positive solutions, we assume that 
$$
f(t)=0, ~\forall t\in(-\infty,0].
$$
	
\end{itemize}

\section{Preliminary results and notations}

In this section, we recall some properties of Orlicz-Sobolev spaces and show an important result of compactness for a special energy functional, which will be defined in Subsection 2.2. 

\subsection{Basics on Orlicz-Sobolev spaces}\label{expoents_variaveis} 

In this subsection, we recall some properties of Orlicz and Orlicz-Sobolev spaces. We refer to \cite{Adams, uni-conv, FN2, RR} for the fundamental properties of these spaces. First of all, we recall that a continuous function $\Phi : \mathbb{R} \rightarrow [0,+\infty)$ is a
N-function if:
\begin{description}
	\item[$(i)$] $\Phi$ is convex.
	\item[$(ii)$] $\Phi(t) = 0 \Leftrightarrow t = 0 $.
	\item[$(iii)$] $\displaystyle\lim_{t\rightarrow0}\frac{\Phi(t)}{t}=0$ and $\displaystyle\lim_{t\rightarrow+\infty}\frac{\Phi(t)}{t}= +\infty$ .
	\item[$(iv)$] $\Phi$ is even.
\end{description}
We say that a N-function $\Phi$ verifies the $\Delta_{2}$-condition, denote by  $\Phi \in \Delta_{2}$, if
\[
\Phi(2t) \leq K\Phi(t),\quad \forall t\geq 0,
\]
for some constant $K > 0$. In what follows, fixed an open set $A \subset \mathbb{R}^{N}$ and a N-function $\Phi$, we define the Orlicz space associated with $\Phi$ as
\[
L^{\Phi}(A) = \left\{  u \in L_{loc}^{1}(A) \colon \ \int_{A} \Phi\Big(\frac{|u|}{\lambda}\Big)dx < + \infty \ \ \mbox{for some}\ \ \lambda >0 \right\}.
\]
The space $L^{\Phi}(A)$ is a Banach space endowed with the Luxemburg norm given by
\[
\Vert u \Vert_{\Phi} = \inf\left\{  \lambda > 0 : \int_{A}\Phi\Big(\frac{|u|}{\lambda}\Big)dx \leq1\right\}.
\]
The complementary function $\widetilde{\Phi}$ associated with $\Phi$ is given
by the Legendre's transformation, that is,
\[
\widetilde{\Phi}(s) = \max_{t\geq 0}\{ st - \Phi(t)\}, \quad  \mbox{for} \quad s\geq0.
\]
The functions $\Phi$ and $\widetilde{\Phi}$ are complementary each other. In \cite{FN2, RR}, we find that
$$
\Phi, \widetilde{\Phi} \in \Delta_{2} \,\,\, \mbox{if, and only if,} \,\, (\phi_2) \,\,\, \mbox{holds}.
$$
Moreover, we also have a Young type inequality given by
\[
st \leq \Phi(t) + \widetilde{\Phi}(s), \quad \forall s, t\geq0.
\]
Using the above inequality, it is possible to prove a H\"older type inequality, that is,
\[
\Big| \int_{A}uvdx \Big| \leq 2 \Vert u \Vert_{\Phi}\Vert v \Vert_{\widetilde{\Phi}},\quad \forall u \in L^{\Phi}(A) \quad \mbox{and} \quad \forall v \in L^{\widetilde{\Phi}}(A).
\]
The corresponding Orlicz-Sobolev space is defined as
\[
W^{1, \Phi}(A) = \Big\{ u \in L^{\Phi}(A) \ :\ \frac{\partial u}{\partial x_{i}} \in L^{\Phi}(A), \quad i = 1, ..., N\Big\},
\]
endowed with the norm
\[
\Vert u \Vert = \Vert \nabla u \Vert_{\Phi} + \Vert u \Vert_{\Phi}.
\]

The space $W_0^{1,\Phi}(A)$ is defined as the closure of $C_0^{\infty}(A)$ with respect to Orlicz-Sobolev norm above.

The spaces $L^{\Phi}(A)$, $W^{1, \Phi}(A)$ and $W_0^{1, \Phi}(A)$ are separable and reflexive, when $\Phi$ and $\widetilde{\Phi}$ satisfy the $\Delta_{2}$-condition. The $\Delta_{2}$-condition implies that
\[
u_{n}\to u \ \mbox{in} \ \ L^{\Phi}(A)\quad \Leftrightarrow \quad \int_{A}\Phi(\vert u_{n} - u\vert)dx \rightarrow 0
\]
and
\[
u_{n}\to u \ \mbox{in} \ \ W^{1, \Phi}(A)\quad \Leftrightarrow  \quad \int_{A}\Phi(\vert \nabla u_{n} - \nabla u\vert)dx \to 0\,\, \mbox{and}\,\, \int_{A}\Phi(\vert u_{n} - u\vert)dx \to 0.
\]
In the literature, we have some important embeddings related to the Orlicz-Sobolev spaces. In \cite{Adams, DT}, it has been shown that if $B$ is a N-function with
\[
\limsup_{t\to 0}\frac{B(t)}{\Phi(t)}< +\infty \quad \mbox{and}\quad \limsup_{t\rightarrow +\infty}\frac{B(t)}{\Phi_{*}(t)}< +\infty,
\]
then the embedding
\[
W^{1, \Phi}(A)\hookrightarrow L^{B}(A)
\]
is continuous. If $A$ is a bounded domain, the embedding is compact.

\subsection{A compactness result for the limit problem}

From now on,  we denote by $I_{\infty}: W^{1,\Phi}(\mathbb{R}^N) \to \mathbb{R}$ the functional given by
$$
I_{\infty}(u)=\int_{\mathbb{R}^N} \Phi(|\nabla u|)\,dx+\int_{\mathbb{R}^N} \Phi(|u|)\,dx-\int_{\mathbb{R}^N} F(u)\,dx. 
$$
Using standard arguments, it is easy to prove that critical points of $I_\infty$ are weak solutions of the quasilinear problem
\begin{equation} \tag{$P_{\infty}$}
\begin{cases}
-\Delta_{\Phi}u+\phi(|u|)u= f(u),~\text{in}~\mathbb{R}^N,\\ u>0, ~\text{in}~ \mathbb{R}^N, \\ u\in W^{1,\Phi}(\mathbb{R}^N),
\end{cases}
\end{equation}
which is called {\it limit problem} associated with $(P_\lambda)$.

In \cite{JMP2014}, Alves and da Silva have proved that the above problem has a ground state solution $w \in W^{1,\Phi}(\mathbb{R}^N)$, that is, a solution which satisfies
$$
I_{\infty}(w)=c_\infty \quad \mbox{and} \quad I'_{\infty}(w)=0,
$$
where $c_\infty$ is the mountain pass level associated with $I_\infty$. Moreover, we also have 
$$
c_\infty=\inf_{u \in \mathcal{M}_\infty}I_\infty(u),
$$
where
$$
\mathcal{M}_\infty=\{u \in W^{1,\Phi}({\mathbb{R}^N}) \setminus \{0\}\,:\,I'_{\infty}(u)u=0\}.
$$ 
The set $\mathcal{M}_\infty$ is called the Nehari Manifold associated with $I_\infty$.

Next, we will prove an important result of compactness associated with functional $I_\infty$.

\begin{theorem} \label{310} (\textbf{Compactness theorem on Nehari manifold})~~ Let $(u_n)\subset W^{1,\Phi}(\mathbb{R}^N)$ be a sequence satisfy $$I_{\infty}(u_n)\rightarrow c_{\infty}~~\text{and}~~ u_n\in \mathcal{M}_{\infty}.$$
Then,\\
\noindent $i)$ \, $(u_n)$ is strongly convergent, \\
or \\
\noindent $ii)$ \, There exists $(y_n)\subset \mathbb{R}^N$ with $|y_n|\rightarrow\infty$ such that the sequence $v_n(x)=u_n(x+y_n)$ is strongly convergent to a function $v\in W^{1,\Phi}(\mathbb{R}^N)$ with  
$$
I_{\infty}(v)=c_{\infty}~~\text{and}~~ v\in \mathcal{M}_{\infty}.
$$
\end{theorem}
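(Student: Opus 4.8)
The plan is to regard $(u_n)$ as a minimizing sequence for $I_\infty$ on the Nehari manifold $\mathcal M_\infty$ and to run a concentration--compactness argument of Lions type adapted to the Orlicz--Sobolev framework. First I would show that $(u_n)$ is bounded in $W^{1,\Phi}(\mathbb R^N)$: since $u_n\in\mathcal M_\infty$ we have $I'_\infty(u_n)u_n=0$, and combining $(\phi_2)$ with the Ambrosetti--Rabinowitz condition $(f_2)$ (recall $\theta>m$) gives
\[
c_\infty+o(1)=I_\infty(u_n)-\tfrac{1}{\theta}I'_\infty(u_n)u_n\ \ge\ \Big(1-\tfrac{m}{\theta}\Big)\Big(\int_{\mathbb R^N}\Phi(|\nabla u_n|)\,dx+\int_{\mathbb R^N}\Phi(|u_n|)\,dx\Big),
\]
and since by $(\phi_2)$ these two modulars control $\|u_n\|$, boundedness follows. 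Because $(\phi_3)$ and $(f_3)$ make $\mathcal M_\infty$ a natural constraint, and $\mathcal M_\infty$ is closed in $W^{1,\Phi}(\mathbb R^N)$ (its elements obey a uniform bound $\|u\|\ge\rho_0>0$), I would then apply Ekeland's variational principle to $I_\infty$ restricted to $\mathcal M_\infty$ and, via the usual Lagrange multiplier argument, replace $(u_n)$ by a sequence --- still denoted $(u_n)$ --- that in addition satisfies $I'_\infty(u_n)\to0$ in $(W^{1,\Phi}(\mathbb R^N))'$; every conclusion below transfers back to the original sequence since the two differ by $o(1)$ in norm. Passing to a subsequence, $u_n\rightharpoonup u$ in $W^{1,\Phi}(\mathbb R^N)$.

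The core of the argument is a dichotomy for $\displaystyle\delta:=\limsup_{n}\sup_{y\in\mathbb R^N}\int_{B_1(y)}\Phi(|u_n|)\,dx$. If $\delta=0$, a Lions type lemma in Orlicz--Sobolev spaces (together with the $o(\cdot)$ behaviour of $f$ near the origin coming from $(f_1)$ and the growth controls $(r_1)$--$(r_4)$, $(b_1)$--$(b_4)$) gives $\int_{\mathbb R^N}f(u_n)u_n\,dx\to0$ and $\int_{\mathbb R^N}F(u_n)\,dx\to0$; then $I'_\infty(u_n)u_n=0$ and $(\phi_2)$ force $\|u_n\|\to0$, hence $I_\infty(u_n)\to0$, contradicting $c_\infty>0$. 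Therefore $\delta>0$, and we may choose $(y_n)\subset\mathbb R^N$ with $\int_{B_1(y_n)}\Phi(|u_n|)\,dx\ge\delta/2$. Since $I_\infty$ and $\mathcal M_\infty$ are translation invariant, $v_n:=u_n(\cdot+y_n)$ still satisfies $v_n\in\mathcal M_\infty$, $I_\infty(v_n)\to c_\infty$, $I'_\infty(v_n)\to0$, is bounded, and (up to a subsequence) $v_n\rightharpoonup v$; the compact embedding $W^{1,\Phi}(B_1)\hookrightarrow L^\Phi(B_1)$ and the concentration bound give $\int_{B_1}\Phi(|v|)\,dx\ge\delta/2$, so $v\ne0$.

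Next I would prove that $v$ is a weak solution of $(P_\infty)$. Using the strict monotonicity of $\xi\mapsto\phi(|\xi|)\xi$ furnished by $(\phi_1)$, and $(\phi_4)$--$(\phi_5)$ to control the operator, I would test $I'_\infty(v_n)$ against $(v_n-v)\eta_R$ for a cut-off $\eta_R$ equal to $1$ on $B_R$ and supported in $B_{2R}$, use the compact embeddings of $W^{1,\Phi}(B_{2R})$ into the relevant Orlicz spaces together with $(f_1)$ to kill the nonlinear contribution, and obtain
\[
\int_{B_{2R}}\big[\phi(|\nabla v_n|)\nabla v_n-\phi(|\nabla v|)\nabla v\big]\cdot(\nabla v_n-\nabla v)\,\eta_R\,dx\longrightarrow0 ,
\]
whence, after a diagonal argument in $R$, $\nabla v_n\to\nabla v$ a.e.\ in $\mathbb R^N$. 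This permits passing to the limit in the weak formulation, so $I'_\infty(v)=0$, $v\in\mathcal M_\infty$ and $I_\infty(v)\ge c_\infty$. Finally, having $\nabla v_n\to\nabla v$ and $v_n\to v$ a.e.\ and the $\Delta_2$-condition, a Brezis--Lieb type splitting yields $I_\infty(v_n)=I_\infty(v)+I_\infty(v_n-v)+o(1)$ and $I'_\infty(v_n-v)(v_n-v)=o(1)$; applying the computation of the first step to $w_n:=v_n-v$ then bounds $\big(1-\tfrac{m}{\theta}\big)\big(\int_{\mathbb R^N}\Phi(|\nabla w_n|)+\int_{\mathbb R^N}\Phi(|w_n|)\big)$ by $c_\infty-I_\infty(v)+o(1)\le o(1)$, so $w_n\to0$ strongly and $I_\infty(v)=c_\infty$. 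To conclude, up to a subsequence: if $(y_n)$ is bounded, say $y_n\to y_0$, then continuity of translations in $W^{1,\Phi}$ gives $u_n\to v(\cdot-y_0)$ strongly --- case $i)$; if $(y_n)$ is unbounded then $|y_n|\to\infty$ and $v_n=u_n(\cdot+y_n)\to v$ strongly with $I_\infty(v)=c_\infty$, $v\in\mathcal M_\infty$ --- case $ii)$.

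I expect the main obstacle to be the genuinely Orlicz technology: the Lions type lemma excluding vanishing, and the $(S_+)$-property together with the Brezis--Lieb splitting, where all estimates are modular rather than normed and the nonlinearity $f$ must be handled simultaneously near the origin and in the tails through the full list of hypotheses $(f_1)$, $(r_1)$--$(r_4)$, $(b_1)$--$(b_4)$. The remaining ingredients --- boundedness, the Nehari projection, the natural-constraint property, and the final translation argument --- are routine once $(\phi_2)$, $(\phi_3)$, $(f_3)$ are available.
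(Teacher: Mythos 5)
Your proposal is correct and follows essentially the same route as the paper: boundedness via $(\phi_2)$, $(f_2)$ and the modular estimate for $I_\infty(u_n)-\frac{1}{\theta}I'_\infty(u_n)u_n$, Ekeland's principle plus the natural-constraint property of $\mathcal{M}_\infty$ (via $(\phi_3)$ and the strict monotonicity of $f(t)/t^{m-1}$) to upgrade to a genuine Palais--Smale sequence, a Lions-type vanishing lemma to produce the translations $(y_n)$, and a final identification of the limit on the Nehari manifold. The only notable differences are organizational: the paper cases on whether the weak limit $u$ is zero rather than on whether $(y_n)$ is bounded, and it obtains strong convergence by applying Fatou's lemma to the nonnegative integrands of $I_\infty(u)-\frac{1}{\theta}I'_\infty(u)u$ instead of your Brezis--Lieb splitting of $I_\infty$ and $I'_\infty$ --- interchangeable implementations of the same argument.
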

\begin{proof}~~ To begin with, we claim that $(u_n)$ is bounded in $W^{1,\Phi}(\mathbb{R}^N)$. Indeed, as $I_{\infty}(u_n)\rightarrow c_{\infty}$,  $(I_{\infty}(u_n))$ is bounded. Then, by  $(\phi_2),(f_2)$ and \protect{\cite[Lemma 2.3]{TMNA2014}}, 
$$
M\geq I_{\infty}(u_n)-\frac1\theta I'_{\infty}(u_n)u_n \geq 
\frac{(\theta-m)}{\theta}[\xi_0(||\nabla u_n||_{\Phi})+\xi_0(||u_n||_{\Phi})],
$$
for some positive constant $M$ and $\xi_{0}(t)=\min\{t^{l}, t^{m}\}$. Hence, $(u_n)$ is bounded in $W^{1,\Phi}(\mathbb{R}^N)$. Thereby, as $W^{1,\Phi}(\mathbb{R}^N)$ is a reflexive space there exists a subsequence 
of $(u_n)$, still denoted by $(u_n)$, and $u \in W^{1,\Phi}(\mathbb{R}^N)$ such that 
$$
u_n\rightharpoonup u~\text{in}~ W^{1,\Phi}(\mathbb{R}^N).
$$
By  Ekeland's Variational Principal, we can assume that $(u_n)$ satisfies  
$$
I'_{\infty}(u_n)-\gamma_n E'(u_n)=o_n(1),
$$
where $\gamma_n$ is a real number and $E_{\infty}(w)=I'_{\infty}(w)w, \forall w\in W^{1,\Phi}(\mathbb{R}^N).$

Using that $u_n\in \mathcal{M}_{\infty}$ together with $(\phi_3)$ and $(f_2)$, there exists $\delta>0$ such that: 
$$
|E'_{\infty}(w_n)w_n|\geq \delta, \forall n\in \mathbb{N}.
$$
Indeed, note that
\begin{align}\label{4.6}
-E'(u_n)u_n&=-\int_{\mathbb{R}^N}\phi'(|\nabla u_n|)|\nabla u_n|^3dx-\int_{\mathbb{R}^N}\phi'(|u_n|)|u_n|^3dx+\int_{\mathbb{R}^N} f'(u_n)u_n^2dx\nonumber\\
&\geq (1-m)\int_{\mathbb{R}^N}\phi(|\nabla u_n|)|\nabla u_n|^2dx+(1-m)\int_{\mathbb{R}^N}\phi(|u_n|)|u_n|^2 dx+\int_{\mathbb{R}^N}f'(u_n)u_n^2 dx\nonumber\\
&=(1-m)\int_{\mathbb{R}^N} f(u_n)u_n dx+\int_{\mathbb{R}^N} f'(u_n)u_n^2 dx\nonumber\\
&=\int_{\mathbb{R}^N}\left[f'(u_n)u_n^2-(m-1) f(u_n)u_n\right] dx.
\end{align}
Since $(u_n)$ is bounded and $||u_n||\kern.3em\not\kern -.3em \longrightarrow 0,$ by \cite[Theoreme 1.3]{TMNA2014} there is $(z_n)\subset \mathbb{R}^N$  such that $\widehat{u}_n(x)=u_n(x+z_n)$ is bounded in $W^{1,\Phi}(\mathbb{R}^N)$ and $\widehat{u}_n \rightharpoonup \widehat{u}$ in $W^{1,\Phi}(\mathbb{R}^N)$ with $\widehat{u}\neq0.$ Therefore,  there exists a measurable subset $A \subset \mathbb{R}^N$ with positive measure, such that $\widehat{u}>0$ a.e. in $A.$ Supposing  by contradiction that 
$$
\limsup_{n\rightarrow+\infty} E'(u_n)u_n=0,
$$
a simple change of variable in $(\ref{4.6})$, the condition $(f_4)$ and the Fatou's Lemma combine to give
$$
0\geq\int_{A} (f'(\widehat{u})\widehat{u}^2-(m-1)f(\widehat{u})\widehat{u}) dx>0,
$$ 
which is a contradiction. Thus, there exists $\delta>0$ such that
$$
|E'_{\infty}(u_n)u_n|>\delta, \forall n\in \mathbb{N}.
$$
As $I'_{\infty}(u_n)u_n =o_n(1)$,  we assure that $\gamma_n E'_{\infty}(u_n)u_n=o_n(1)$, which
yields $\gamma_n=o_n(1)$, and so, 
\begin{equation}\label{31}
I_{\infty}(u_n)\rightarrow c_{\infty}~\text{and}~I'_{\infty}(u_n)\rightarrow 0.
\end{equation}
  
Next, we will study the following situations:  $u\neq 0$ and $u=0$.
  
\noindent  \textbf{Case 1: $u\neq0$.} ~~~  From \protect{\cite[Lemma 4.3]{TMNA2014}}, for some subsequence, 
  $$
  \nabla u_n(x)\rightarrow \nabla u(x)~ \text{and}~ u_n(x)\rightarrow u(x)~\text{a.e. in} ~~ \mathbb{R}^N.
  $$
  Using the limit $I'_{\infty}(u_n)u\rightarrow 0,$ we see that $I'_{\infty}(u)u=0,$ from where it follows that $u \in \mathcal{M}_\infty$. Consequently, 
  $$
  c_{\infty}\leq I_{\infty}(u)=I_{\infty}(u)-\frac1\theta I'_{\infty}(u)u.
  $$ 
  Now, by Fatou's lemma,  
\begin{align*}
c_{\infty}&\leq\int_{\mathbb{R}^N}\Phi(|\nabla u|) dx+\int_{\mathbb{R}^N}\Phi(|u|) dx-\int_{\mathbb{R}^N}F(u) dx\\&-\frac1\theta\int_{\mathbb{R}^N}\phi(|\nabla u|)|\nabla u|^2 dx-\frac1\theta \int_{\mathbb{R}^N}\phi(|u|)|u|^2 dx+\frac1\theta \int_{\mathbb{R}^N} f(u)u dx\\
&\leq (1-\frac{l}{\theta})\int_{\mathbb{R}^N}[\Phi(|\nabla u|)+\Phi(|u|)] dx+\int_{\mathbb{R}^N}\frac1\theta f(u)u-F(u) dx\\
&\leq\liminf_{n \to \infty}\left[(1-\frac{l}{\theta})\int_{\mathbb{R}^N}[\Phi(|\nabla u_n|)+\Phi(|u_n|)] dx+\int_{\mathbb{R}^N}\frac1\theta f(u_n)u_n-F(u_n) dx\right]\\
&\leq c_{\infty}
\end{align*}  
which leads to 
$$
\lim_{n\rightarrow\infty}\int_{\mathbb{R}^N} (\Phi(|\nabla u_n|)+\Phi(|u_n|)) dx=\int_{\mathbb{R}^N}(\Phi(|\nabla u|)+\Phi(|u|)) dx.
$$ 
The above limit combined with $\Delta_2-$condition gives 
$$
u_n\rightarrow u ~\text{in}~ W^{1,\Phi}(\mathbb{R}^N).
$$ 

\noindent \textbf{Case 2: $u=0.$} ~~ In this case, we claim that there are $R,\eta>0$ and $(y_n) \subset \mathbb{R}^N$ which satisfy  
$$
\limsup_{n\rightarrow\infty}\int_{B_{R}(y_n)}\Phi(|u_n|) dx\geq \eta >0.
$$
If this is not true, we must have 
$$
\lim_{n\rightarrow\infty}\sup_{y\in \mathbb{R}^N}\int_{B_R(y)}\Phi(|u_n|) dx=0.
$$
Then, by \protect{\cite[Theoreme 1.3]{TMNA2014}}, 
$$
u_n\rightarrow0~\text{in}~ L^B(\mathbb{R}^N).
$$
The above limit together with $(f_1)$ implies that  $\int_{\mathbb{R}^N} f(u_n)u_ndx\rightarrow0.$
As $u_n\in\mathcal{M}_{\infty}$, we obtain that 
$$\int_{\mathbb{R}^N} \phi(|\nabla u_n|)|\nabla u_n|^2 dx+\int_{\mathbb{R}^N} \phi(|u_n|)|u_n|^2 dx\rightarrow 0.
$$
Then by $(\phi_2)$,  we derive that $I_{\infty}(u_n)\rightarrow0~\text{in}~ W^{1,\Phi}(\mathbb{R}^N)$, which is an absurd, because $I_\infty(u_n) \to c_{\infty}>0$.

Setting $v_n=u_n(x+y_n)$, we derive that $I_{\infty}(v_n)\rightarrow c_{\infty}~\text{and}~ I'_{\infty}(v_{n})\rightarrow0.$  Then, $(v_n)$ is clearly bounded in $W^{1,\Phi}(\mathbb{R}^N)$ and there exists $v\in W^{1,\Phi}(\mathbb{R}^N)$ with $v\neq0$ such that $$v_n\rightharpoonup v~\text{in}~ W^{1,\Phi}(\mathbb{R}^N).$$
Using the same arguments of Case $1$, $v_n\rightarrow v~\text{in}~ W^{1,\Phi}(\mathbb{R}^N).$

Next, we will show that $|y_n|\rightarrow \infty.$ If this does not hold,  $(y_n)$ is bounded in $\mathbb{R}^{N}$ for some subsequence, and  there exists $R'>0$ such that $B_{R}(y_n)\subset B_{R'}(0).$  Hence 
$$
\int_{B_{R'}(0)}\Phi(|u_n|)\,dx \geq \eta>0, \quad \forall n \in \mathbb{N}.
$$
As $u_n\rightharpoonup 0$ in $W^{1,\Phi}(\mathbb{R}^N)$ and $W^{1,\Phi}(\mathbb{R}^N)$ is compactly embedded in $L^{\Phi}(B_{R'}(0))$, we have that $u_n\rightarrow 0$ in $L^{\Phi}(B_{R'}(0))$, which contradicts the last inequality.  

\end{proof}

The next two results are related to the functional $I_\lambda$ and they will be used later on.

\begin{proposition} \label{Pb}
The functional $I_{\lambda}$ satisfies the Palais-Smale condition on $\mathcal{M}_{\lambda}, $ that is , if $(u_n)\subset \mathcal{M}_{\lambda}$ satisfies 
$$
I_{\lambda}(u_n)\rightarrow c~\mbox{and}~ ||I'_{\lambda}(u_n)||_{\star}\rightarrow 0.
$$
then there exists a subsequence, still denoted by $(u_n)$ which is strongly convergent in $W_0^{1,\Phi}(\Omega_{\lambda}).$ Here, $||I'_{\lambda}(v)||_{\star}$ denotes the norm of the derivative of the restriction of $I_\lambda$ to  $\mathcal{M}_\lambda$ at $v$.
\end{proposition}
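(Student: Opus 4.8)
The plan is to reduce the Palais--Smale condition on the Nehari manifold $\mathcal{M}_\lambda$ to the (PS) condition for the free functional $I_\lambda$ on all of $W_0^{1,\Phi}(\Omega_\lambda)$, and then to establish the latter directly using the compactness of the Sobolev embedding on the bounded domain $\Omega_\lambda$. Let $(u_n)\subset\mathcal{M}_\lambda$ satisfy $I_\lambda(u_n)\to c$ and $\|I'_\lambda(u_n)\|_\star\to 0$. First I would show $(u_n)$ is bounded: as in the proof of Theorem \ref{310}, using $(\phi_2)$, $(f_2)$ and \cite[Lemma 2.3]{TMNA2014}, one has $c+o_n(1)\geq I_\lambda(u_n)-\frac{1}{\theta}I'_\lambda(u_n)u_n\geq \frac{\theta-m}{\theta}[\xi_0(\|\nabla u_n\|_\Phi)+\xi_0(\|u_n\|_\Phi)]$ with $\xi_0(t)=\min\{t^l,t^m\}$; note $I'_\lambda(u_n)u_n=0$ since $u_n\in\mathcal{M}_\lambda$, so boundedness follows. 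By reflexivity, up to a subsequence $u_n\rightharpoonup u$ in $W_0^{1,\Phi}(\Omega_\lambda)$.

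Next I would upgrade from $\|I'_\lambda(u_n)\|_\star\to 0$ to $\|I'_\lambda(u_n)\|\to 0$ in the full dual space. By the Lagrange multiplier rule there are $\gamma_n\in\mathbb{R}$ with $I'_\lambda(u_n)-\gamma_n E'_\lambda(u_n)=o_n(1)$, where $E_\lambda(w)=I'_\lambda(w)w$. Testing against $u_n$ and using $I'_\lambda(u_n)u_n=0$ gives $\gamma_n E'_\lambda(u_n)u_n=o_n(1)$. As in Theorem \ref{310}, the bound $\|u_n\|\not\to 0$ together with the concentration-compactness splitting of \cite[Theoreme 1.3]{TMNA2014}, condition $(f_3)$ (strict monotonicity of $f(t)/t^{m-1}$), $(\phi_3)$ and Fatou's lemma yield a uniform lower bound $|E'_\lambda(u_n)u_n|\geq\delta>0$; hence $\gamma_n=o_n(1)$ and therefore $I'_\lambda(u_n)\to 0$ in $(W_0^{1,\Phi}(\Omega_\lambda))^*$. (Here one must be slightly careful that the concentration argument is adapted to the bounded domain $\Omega_\lambda$; since translations are not available, one simply uses that $\|u_n\|\not\to 0$ forces a nonzero weak limit along a subsequence, because $W_0^{1,\Phi}(\Omega_\lambda)\hookrightarrow L^{\Phi}(\Omega_\lambda)$ is compact.)

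It then remains to prove that $I_\lambda$ satisfies (PS) on $W_0^{1,\Phi}(\Omega_\lambda)$, i.e. that $u_n\to u$ strongly. Since $\Omega_\lambda$ is bounded, the embeddings $W_0^{1,\Phi}(\Omega_\lambda)\hookrightarrow L^{B}(\Omega_\lambda)$ and $W_0^{1,\Phi}(\Omega_\lambda)\hookrightarrow L^{R}(\Omega_\lambda)$ are compact by $(b_4)$, $(r_4)$ and the embedding theorem recalled in Subsection 2.1; combined with $(f_1)$ this gives $\int_{\Omega_\lambda}f(u_n)(u_n-u)\,dx\to 0$ and $\int_{\Omega_\lambda}f(u_n)u_n\,dx\to\int_{\Omega_\lambda}f(u)u\,dx$. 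From $I'_\lambda(u_n)(u_n-u)\to 0$ we extract
$$
\int_{\Omega_\lambda}\big(\phi(|\nabla u_n|)\nabla u_n-\phi(|\nabla u|)\nabla u\big)\cdot(\nabla u_n-\nabla u)\,dx+\int_{\Omega_\lambda}\big(\phi(|u_n|)u_n-\phi(|u|)u\big)(u_n-u)\,dx\to 0,
$$
after adding and subtracting the terms with $u$ and using $u_n\rightharpoonup u$. By $(\phi_1)$ the map $t\mapsto\phi(|t|)t$ is strictly monotone, so both integrands are nonnegative and each integral tends to $0$; a standard argument (as in \cite[Lemma 4.3]{TMNA2014}) then yields $\nabla u_n\to\nabla u$ and $u_n\to u$ a.e., and together with the convergence of the modular $\int_{\Omega_\lambda}(\Phi(|\nabla u_n|)+\Phi(|u_n|))\,dx\to\int_{\Omega_\lambda}(\Phi(|\nabla u|)+\Phi(|u|))\,dx$ (obtained exactly as in Case 1 of Theorem \ref{310} via Fatou and $I'_\lambda(u_n)u_n\to 0$), the $\Delta_2$-condition forces $u_n\to u$ in $W_0^{1,\Phi}(\Omega_\lambda)$. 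Finally, the limit point $u$ is nonzero (so $u\in\mathcal{M}_\lambda$) because $\|u_n\|\not\to 0$ is preserved under strong convergence; if instead $c$ happened to be at a level where a nonzero limit is not a priori clear, one invokes that $c>0$ together with the lower bound on $\|u_n\|$ established above. The main obstacle is the passage from $\|I'_\lambda\|_\star\to 0$ to $\|I'_\lambda\|\to 0$, i.e. controlling the Lagrange multiplier $\gamma_n$ via the uniform non-degeneracy $|E'_\lambda(u_n)u_n|\geq\delta$; everything after that is the by-now-standard compactness machinery for $\Phi$-Laplacian functionals on bounded domains.
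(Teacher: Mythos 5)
Your proposal is correct and follows essentially the same route as the paper: the paper's proof reduces the problem to a free $(PS)_c$ sequence by controlling the Lagrange multiplier exactly as in the proof of Theorem \ref{310} (via the uniform lower bound $|E'_{\lambda}(u_n)u_n|\geq\delta$), and then simply cites \cite[Section 4]{TMNA2014} for the fact that $I_\lambda$ satisfies $(PS)$ on the bounded domain $\Omega_\lambda$. You have merely written out in full the compactness argument that the paper delegates to that reference (and correctly used $(f_3)$ where the paper's earlier proof misprints ``$(f_4)$'').
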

\begin{proof} Repeating the same arguments explored in the proof of Theorem \ref{310} , we can assume that $(u_n)$ is a $(PS)_c$ sequence for $I_\lambda$, that is, 
$$
I_{\lambda}(u_n)\rightarrow c~\mbox{and}~ ||I'_{\lambda}(u_n)|| \rightarrow 0.
$$
Now, as $\Omega_\lambda$ is bounded, the same type of arguments found in \cite[Section 4]{TMNA2014} guarantee that $I_\lambda$ verifies the $(PS)$ condition, and the proof is complete. 

\end{proof}

The next proposition shows that critical points of $I_{\lambda}$ on $\mathcal{M}_{\lambda}$ are critical point of $I_{\lambda}$ in $W_0^{1,\Phi}(\Omega_{\lambda}).$

\begin{proposition}
If $u\in \mathcal{M}_{\lambda}$ is a critical point of $I_{\lambda}$ on $\mathcal{M}_{\lambda},$ then $u$ is a nontrivial critical point of $I_{\lambda}$ in $W_0^{1,\Phi}(\Omega_{\lambda}).$ Moreover, $u \in C^{1,\alpha}(\overline{\Omega_\lambda})$ and $u(x)>0$ for all $x \in \Omega_\lambda$.  
\end{proposition}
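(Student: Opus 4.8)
The plan is to treat $\mathcal{M}_\lambda$ as a natural constraint via the Lagrange multiplier rule. First I would write $\mathcal{M}_\lambda=\{w\neq 0:E_\lambda(w)=0\}$ with $E_\lambda(w):=I'_\lambda(w)w$ a $C^1$ functional, and deduce from the hypothesis that $u$ is a critical point of the restriction $I_\lambda|_{\mathcal{M}_\lambda}$ the existence of a multiplier $\mu\in\mathbb{R}$ with $I'_\lambda(u)=\mu\,E'_\lambda(u)$ in the dual of $W_0^{1,\Phi}(\Omega_\lambda)$. Evaluating this identity on $u$ and using $I'_\lambda(u)u=0$ (which holds because $u\in\mathcal{M}_\lambda$) gives $\mu\,E'_\lambda(u)u=0$, so everything reduces to showing $E'_\lambda(u)u\neq 0$; once that is established, $\mu=0$, hence $I'_\lambda(u)=0$, and $u$ is a nontrivial critical point of $I_\lambda$ on the whole space since $0\notin\mathcal{M}_\lambda$.

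The key computation — and, I expect, the only genuinely delicate point — is the transversality $E'_\lambda(u)u\neq 0$. I would compute
\begin{align*}
E'_\lambda(u)u={}&\int_{\Omega_\lambda}\big[\phi'(|\nabla u|)|\nabla u|^3+2\phi(|\nabla u|)|\nabla u|^2\big]dx+\int_{\Omega_\lambda}\big[\phi'(|u|)|u|^3+2\phi(|u|)|u|^2\big]dx\\
&{}-\int_{\Omega_\lambda}\big[f'(u)u^2+f(u)u\big]dx,
\end{align*}
bound each bracket in the first two integrals above by $m\,\phi(\cdot)(\cdot)^2$ using $\phi'(t)t\leq(m-2)\phi(t)$ for $t>0$ (a consequence of $(\phi_3)$), then use $u\in\mathcal{M}_\lambda$ to replace $\int_{\Omega_\lambda}[\phi(|\nabla u|)|\nabla u|^2+\phi(|u|)|u|^2]\,dx$ by $\int_{\Omega_\lambda}f(u)u\,dx$, arriving at
\[
E'_\lambda(u)u\leq-\int_{\Omega_\lambda}\big[f'(u)u^2-(m-1)f(u)u\big]dx.
\]
By $(f_3)$ one has $f'(t)t>(m-1)f(t)$ for $t>0$, and $f\equiv 0$ on $(-\infty,0]$, so the integrand is nonnegative and strictly positive on $\{u>0\}$; moreover $\{u>0\}$ must have positive measure, for otherwise $f(u)\equiv 0$ and $I'_\lambda(u)u=0$ would force $u\equiv 0$ by $(\phi_1)$, contradicting $u\in\mathcal{M}_\lambda$. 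Hence $E'_\lambda(u)u<0$. This is essentially the same estimate already used in the proof of Theorem~\ref{310} to produce the constant $\delta$, so it should go through with no new difficulty.

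For regularity and positivity I would argue as follows. Being a free critical point, $u$ solves $(P_\lambda)$ weakly. The subcritical growth encoded in $(f_1)$, $(b_3)$--$(b_4)$ and $(r_3)$--$(r_4)$ together with the Orlicz--Sobolev embeddings of Subsection~\ref{expoents_variaveis} permit a Moser-type iteration in this setting, giving $u\in L^\infty(\Omega_\lambda)$; then $f(u)-\phi(|u|)u\in L^\infty(\Omega_\lambda)$ and the $C^{1,\alpha}$-regularity theory for the $\Phi$-Laplacian (of Lieberman and Tolksdorf type), which applies thanks to $(\phi_1)$, $(\phi_2)$, $(\phi_4)$, $(\phi_5)$ and the smoothness of $\Omega$, upgrades $u$ to $C^{1,\alpha}(\overline{\Omega_\lambda})$. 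Finally, testing the equation with $u^{-}=\min\{u,0\}\in W_0^{1,\Phi}(\Omega_\lambda)$ and using $f\equiv 0$ on $(-\infty,0]$ yields $\int_{\{u<0\}}\phi(|u|)|u|^2\,dx=0$, hence $u\geq 0$ by $(\phi_1)$; since $u\not\equiv 0$, the strong maximum principle for quasilinear operators of $\Phi$-Laplacian type gives $u(x)>0$ for all $x\in\Omega_\lambda$. These last steps are routine applications of known results, so the heart of the proof remains the transversality estimate above.
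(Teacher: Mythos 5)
Your proposal is correct and follows essentially the same route as the paper: Lagrange multiplier, then the transversality estimate $E'_\lambda(u)u<0$ obtained from $(\phi_3)$ (via $\phi'(t)t\leq(m-2)\phi(t)$), the Nehari identity, and the strict inequality $f'(t)t>(m-1)f(t)$ coming from $(f_3)$, followed by the standard regularity, sign, and Harnack/maximum-principle arguments. In fact you spell out the transversality computation and the positive-measure of $\{u>0\}$ more carefully than the paper does (which compresses this into one line and cites a nonexistent condition $(f_4)$, evidently a typo for the consequence of $(f_3)$).
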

\begin{proof}
Suppose that $u\in \mathcal{M}_{\lambda}$ is a critical point of $I_{\lambda}$ on $\mathcal{M}_{\lambda}.$ Then $u\neq 0$ and there exists $\gamma\in \mathbb{R}$ such that 
$$
I'_{\lambda}(u)=\gamma E'_{\lambda}(u) \quad (\mbox{See Willem} \,\, \cite{W}).
$$ 
As $I'_{\lambda}(u)u=0$, we have that $\gamma E'_{\lambda}(u)u=0.$ From condition $(\phi_3)$ and $(f_4)$, 
$$
-E'_{\lambda}(u)u\geq \int_{\Omega_{\lambda}}(f'(u)-(m-1) f(u))u \, dx>0.
$$ 
Then, $\gamma=0$ and $I'_{\lambda}(u)=0,$ from where it follows that $u$ is a critical point of $I_\lambda$. By \cite{L} and \cite{TF}, we deduce that $u \in C^{1,\alpha}(\overline{\Omega_\lambda})$ for some $\alpha \in (0,1)$. Since we are supposing $f(t)=0$ for $t \leq 0$, we have that $I'(u)u^{-}=0$, where $u^{-}=\min\{u,0\}$. As
$$
I'(u)u^{-}=\int_{\Omega_\lambda}(\phi(|\nabla u^{-}|)|\nabla u^{-}|^{2}+\phi(|u^{-}|)|u^{-}|^{2})\,dx,
$$ 
the condition $(\phi_2)$ yields $u^{-}=0$, then $u(x) \geq 0$ for all $x \in \Omega_\lambda$.  Now, the positiveness of $u$ follows from \cite[Theorem 1.1]{T}( see also \cite{JMP2014} ).
\end{proof}

\section{Behavior of minimax levels}

This section is concerned with the  study of the  behavior of some minimax levels which are crucial in our approach. To do this, we need to fix some notations and definitions. 

In what follows, we assume without loss of generality that $0\in \Omega.$ Furthermore, we fix a real number $r>0$ such that the sets $\Omega_+$ and $\Omega_-$ given by
$$
\Omega_+=\{x\in \mathbb{R}^N, d(x,\overline{\Omega})\leq r\}
$$ 
and 
$$
\Omega_-=\{x\in \Omega, d(x,\partial\Omega)\geq r\}
$$ 
are homotopically equivalent to $\Omega.$ Moreover, for each $x\in \mathbb{R}^N$ and $R>r>0,$ we define 
$$
A_{R,r,x} =B_R(x)\setminus \overline{B}_r(x). 
$$ 
Hereafter, we denote by $A_{R,r}$ the set $A_{R,r,0}.$ 

For each $u\in W^{1,\Phi}(\mathbb{R}^N)$ with compact support, we consider 
$$
\beta(u)=\frac{\int_{\mathbb{R}^N}x\Phi(|\nabla u|)dx}{\int_{\mathbb{R}^N}\Phi(|\nabla u|)dx} \,\,\,\,\,\,\, ( \mbox{Barycenter function ) }
$$
and for each $x\in \mathbb{R}^N$, we set $a (R,r,\lambda,x)$ by 
$$
a (R,r,\lambda,x)=\inf\{J_{\lambda,x}(u), \beta(u)=x, u\in \widehat{\mathcal{M}}_{\lambda,x}\},
$$
where 
$$
J_{\lambda,x}(u)=\int_{A_{\lambda R,\lambda r,x}}(\Phi(|\nabla u|)+\Phi(|u|)) dx-\int_{A_{\lambda R,\lambda r,x}} F(u) dx
$$ 
and  
$$
\widehat{\mathcal{M}}_{\lambda,x}=\{u\in W_0^{1,\Phi}(A_{\lambda R,\lambda r, x})\setminus\{0\}: J'_{\lambda,x}(u)u=0\}.
$$
In the sequel, $a(R,r,\lambda), J_{\lambda}~\text{and}~ \widehat{\mathcal{M}}_{\lambda}$ denote $a(R,r,\lambda,0), J_{\lambda,0}$ and $\widehat{\mathcal{M}}_{\lambda,0}$ respectively.
\begin{proposition}\label{4.1} The number $a(R,r,\lambda)$ satisfies $$\liminf_{\lambda\rightarrow\infty} a(R,r,\lambda)>c_{\infty}.$$
\end{proposition}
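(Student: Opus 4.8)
The plan is to argue by contradiction, using that the Nehari manifold of the annulus sits inside $\mathcal M_\infty$. First I would record the elementary bound $a(R,r,\lambda)\ge c_\infty$: if $u\in\widehat{\mathcal M}_{\lambda}$, then its extension by zero to $\mathbb R^N$ (still denoted $u$) lies in $W^{1,\Phi}(\mathbb R^N)$ and satisfies $I_\infty(u)=J_{\lambda}(u)$ and $I'_\infty(u)u=J'_{\lambda}(u)u=0$, so $u\in\mathcal M_\infty$ and $J_{\lambda}(u)=I_\infty(u)\ge c_\infty$; taking the infimum gives the claim. Hence, were the proposition false, we would have $\liminf_{\lambda\to\infty}a(R,r,\lambda)=c_\infty$, so we could pick $\lambda_n\to\infty$ with $a(R,r,\lambda_n)\to c_\infty$ and $u_n\in\widehat{\mathcal M}_{\lambda_n}$ with $\beta(u_n)=0$ and $J_{\lambda_n}(u_n)\le a(R,r,\lambda_n)+\tfrac1n$. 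Extending each $u_n$ by zero, we get $(u_n)\subset\mathcal M_\infty$ with $I_\infty(u_n)\to c_\infty$, and each $u_n$ is supported in the annulus $A_{\lambda_n R,\lambda_n r}$, so in particular $\mathrm{supp}\,u_n\subset\{|x|\le\lambda_n R\}$ while $\nabla u_n(x)=0$ whenever $|x|<\lambda_n r$.

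Next I would apply Theorem~\ref{310} to $(u_n)$ and discard alternative $i)$. Indeed, if $u_n\to u$ strongly in $W^{1,\Phi}(\mathbb R^N)$, then for any fixed $\rho>0$ we have $u_n\equiv0$ on $B_\rho(0)$ as soon as $\lambda_n r>\rho$, hence $u\equiv0$ on $B_\rho(0)$, and letting $\rho\to\infty$ gives $u\equiv0$; but then $c_\infty=\lim I_\infty(u_n)=I_\infty(0)=0$, contradicting $c_\infty>0$. Therefore alternative $ii)$ holds: there is $(y_n)\subset\mathbb R^N$ with $|y_n|\to\infty$ such that $v_n:=u_n(\cdot+y_n)\to v$ strongly in $W^{1,\Phi}(\mathbb R^N)$, $v\in\mathcal M_\infty$ and $I_\infty(v)=c_\infty$. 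In particular $v\neq0$, so $c_0:=\int_{\mathbb R^N}\Phi(|\nabla v|)\,dx>0$, and $T_n:=\int_{\mathbb R^N}\Phi(|\nabla u_n|)\,dx=\int_{\mathbb R^N}\Phi(|\nabla v_n|)\,dz\to c_0$.

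Finally I would extract the contradiction from $\beta(u_n)=0$, i.e. $\int_{\mathbb R^N}x\,\Phi(|\nabla u_n|)\,dx=0$. Fix $\varepsilon>0$. Using $v_n\to v$ in $W^{1,\Phi}(\mathbb R^N)$ and the $\Delta_2$-condition, one finds $\rho=\rho(\varepsilon)>0$ and $n_0\in\mathbb N$ such that, for $n\ge n_0$,
$$
\int_{B_\rho(y_n)}\Phi(|\nabla u_n|)\,dx=\int_{B_\rho(0)}\Phi(|\nabla v_n|)\,dz\ge(1-\varepsilon)c_0
\quad\text{and}\quad
\int_{\mathbb R^N\setminus B_\rho(y_n)}\Phi(|\nabla u_n|)\,dx\le\varepsilon c_0.
$$
Since the first integral is positive, $B_\rho(y_n)$ meets $\mathrm{supp}\,u_n\subset\{|x|\ge\lambda_n r\}$, whence $|y_n|\ge\lambda_n r-\rho$. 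Writing
$$
0=\int_{\mathbb R^N}x\,\Phi(|\nabla u_n|)\,dx=y_n\int_{B_\rho(y_n)}\Phi(|\nabla u_n|)\,dx+\int_{B_\rho(y_n)}(x-y_n)\,\Phi(|\nabla u_n|)\,dx+\int_{\mathbb R^N\setminus B_\rho(y_n)}x\,\Phi(|\nabla u_n|)\,dx,
$$
taking norms and using $|x-y_n|\le\rho$ on $B_\rho(y_n)$ and $|x|\le\lambda_n R$ on $\mathrm{supp}\,u_n$, we obtain
$$
(\lambda_n r-\rho)(1-\varepsilon)c_0\le|y_n|\int_{B_\rho(y_n)}\Phi(|\nabla u_n|)\,dx\le\rho\,T_n+\lambda_n R\,\varepsilon c_0.
$$
Dividing by $\lambda_n$ and letting $n\to\infty$ yields $r(1-\varepsilon)c_0\le R\,\varepsilon c_0$, i.e. $r(1-\varepsilon)\le R\varepsilon$; choosing $\varepsilon<r/(R+r)$ is absurd, which proves the proposition.

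I expect the delicate point to be this last step. A naive estimate of $\int_{\mathbb R^N}x\,\Phi(|\nabla u_n|)\,dx$ fails because $\mathrm{supp}\,u_n$ reaches distance $\sim\lambda_n R$ from the origin, so an $o(1)$ amount of gradient mass far away could a priori contribute an amount comparable to $|y_n|\sim\lambda_n$. The cure is to split the moment integral around $y_n$ — where, by the strong convergence $v_n\to v$, essentially all the gradient mass concentrates — rather than around the origin, and to bound the far-away remainder crudely by $|x|\le\lambda_n R$ times the small mass $\varepsilon c_0$ it carries; the inequality closes because $|y_n|\ge\lambda_n r-\rho$ beats $\lambda_n R\,\varepsilon$ once $\varepsilon<r/(R+r)$. (One should also check that $\{u\in\widehat{\mathcal M}_{\lambda}:\beta(u)=0\}$ is nonempty for the relevant $\lambda$; otherwise $a(R,r,\lambda)=+\infty$ and there is nothing to prove.)
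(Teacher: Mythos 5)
Your proof is correct, and it follows the same overall strategy as the paper: establish $a(R,r,\lambda)\ge c_\infty$ by zero--extension, argue by contradiction via a minimizing sequence $u_n\in\widehat{\mathcal M}_{\lambda_n}$ with $\beta(u_n)=0$, invoke Theorem~\ref{310} to get concentration of the gradient mass around points $y_n$ with $|y_n|\to\infty$, and then show this is incompatible with the barycenter constraint. The difference lies in how the moment integral $\int x\,\Phi(|\nabla u_n|)\,dx=0$ is exploited. The paper uses rotational invariance of $I_\infty$ to place $y_n$ on the negative $x_1$--axis, splits the annulus into $\Theta_n=B_{r\lambda_n/2}(y_n)\cap A_{\lambda_nR,\lambda_nr}$ and its complement $\Gamma_n$, shows $x_1<-r\lambda_n/2$ on $\Theta_n$ by elementary geometry, and contradicts $\int_{\Gamma_n}\Phi(|\nabla u_n|)\to0$. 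You instead keep the full vector moment, center the decomposition on a ball $B_\rho(y_n)$ of \emph{fixed} radius carrying $(1-\varepsilon)$ of the mass, and obtain the needed lower bound $|y_n|\ge\lambda_nr-\rho$ directly from the support constraint $\operatorname{supp}u_n\subset\{|x|\ge\lambda_nr\}$; dividing by $\lambda_n$ then closes the argument for $\varepsilon<r/(R+r)$. Your version dispenses with the rotation and the growing ball $B_{r\lambda_n/2}(y_n)$, at the price of an explicit $\varepsilon$--bookkeeping; it is, if anything, slightly more careful than the paper in two respects that the paper leaves implicit, namely ruling out alternative $i)$ of Theorem~\ref{310} (the strong limit would have to vanish since $u_n\equiv0$ on $B_{\lambda_nr}(0)$) and noting that the constraint set $\{u\in\widehat{\mathcal M}_\lambda:\beta(u)=0\}$ could be empty, in which case there is nothing to prove.
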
 
\begin{proof}
From definitions of $a(R,r,\lambda)$ and $c_{\infty}$, we know that 
$$
a(R,r,\lambda)\geq c_{\infty}.
$$
Therefore, $$ \liminf_{\lambda\rightarrow+\infty} a(R,r,\lambda)\geq c_{\infty}.$$
Suppose by contradiction that 
$$ 
\liminf_{\lambda\rightarrow+\infty} a(R,r,\lambda)= c_{\infty}.
$$
Then there exists $\lambda_n\rightarrow\infty$ and $u_n\in \widehat{\mathcal{M}}_{\lambda_n}$ such that 
$$
\beta(u_n)=0 ~\text{and}~ a(R,r,\lambda_n)\rightarrow c_{\infty}.
$$ 
By Theorem \ref{310},   
$$
u_n(x)=w_n(x)+v(x-y_n),
$$ 
where  $(w_n)\subset W^{1,\Phi}(\mathbb{R}^N)$ converges strongly to $0$ in $W^{1,\Phi}(\mathbb{R}^N),$ $(y_n)\subset \mathbb{R}^N$ satisfies $|y_n|\rightarrow\infty$ and $v\in W^{1,\Phi}(\mathbb{R}^N)$ is a positive function with 
$$
I_{\infty}(v)=c_{\infty}~\text{and}~ I'_{\infty}(v)=0.
$$
As $I_{\infty}$ is rotationally invariant, we can assume that 
$$
y_n=(y_n^1,0,0,\ldots,0) \quad \mbox{and} \quad y_n^1<0.
$$
Setting 
$$
M=\int_{\mathbb{R}^N} \Phi(|\nabla v|) dx>0,
$$ 
a direct computation gives 
$$
\int_{B_{r\lambda_n/2}(y_n)}\Phi(|\nabla( w_n+v(.-y_n))|) dx \rightarrow M.
$$ 
In the sequel, we consider the two following sets:
$$
\Theta_n= B_{r\lambda_n/2}(y_n)\cap [B_{\lambda_n R}(0)\setminus \overline{B}_{\lambda_n r}(0)]
$$ 
and 
$$
\Gamma_n=\left[B_{\lambda_n R}(0)\setminus  \overline{B}_{\lambda_n r}(0) \right]\setminus B_{r\lambda_n/2 }(y_n).
$$
As $(u_n)\subset W_0^{1,\Phi}(A_{\lambda_n R,\lambda_n r}),$  
\begin{equation}
\int_{B_{r\lambda_n/2}(y_n)} \Phi(|\nabla u_n|) dx= \int_{A_{\lambda_n R,\lambda_n r}\cap B_{{r\lambda_n/2}}(y_n)} \Phi(|\nabla u_n|) dx=\int_{\Theta_n}\Phi(|\nabla u_n|) dx.
\end{equation}
From this,  
\begin{equation}\label{M}
\int_{\Theta_n}\Phi(|\nabla u_n|) dx \rightarrow M
\end{equation}
and 
\begin{equation}\label{41}
\int_{\Gamma_n}\Phi(|\nabla  u_n|) dx \rightarrow 0. 
\end{equation}
Since $\beta(u_n)=0$, we know that 
\begin{equation}\label{413}
0=\int_{A_{\lambda_n R, \lambda_n r}}x_1 \Phi(|\nabla u_n|) dx=\int_{\Theta_n} x_1 \Phi(|\nabla u_n|) dx+\int_{\Gamma_n} x_1 \Phi(|\nabla u_n|) dx.
\end{equation}
From the definition of $\Gamma_n$,  
\begin{equation}\label{414}
\int_{\Gamma_n} x_1 \Phi(|\nabla u_n|) dx\leq R\lambda_n \int_{\Gamma_n}\Phi(|\nabla u_n|) dx.
\end{equation}
On another side, if $x\in \Theta_n$, then $x\in B_{r\lambda_n/2}(y_n)$ and $x\notin \overline{B}_{r\lambda_n}(0)$. Hence,  
$$|x_1-y_n^1|^2+\sum_{i=2}^N|x_i|^2\leq \frac{r^2\lambda_n^2}{4} ~~\text{and}~~ \sum_{i=2}^N|x_i|^2>r^2\lambda_n^2-|x_1|^2,
$$
from where it follows that 
$$
|x_1|\geq \frac{\sqrt{3}r\lambda_n}{2}>\frac{r\lambda_n}{2}.
$$
The above inequality together with 
$$
|x_1-y_n^1|^2+\sum_{i=2}^N|x_i|^2\leq \frac{r^2\lambda_n^2}{4}
$$ 
implies that $x_1<-\frac{r\lambda_n}{2}.$ This combine with $(\ref{M})$ to give 
\begin{equation}\label{415}
\int_{\Theta_n} x_1\Phi(|\nabla u_n|) dx\leq -\frac{r\lambda_n}{2}(M+o_n(1)). 
\end{equation}
Thereby, $(\ref{414})$, $(\ref{415})$ and $(\ref{413})$ lead to  
\begin{equation}
0=\int_{A_{\lambda_n R,\lambda_n r}} x_1\Phi(|\nabla u_n|) dx\leq -\frac{r\lambda_n}{2}(M+o(1))+R\lambda_n \int_{\Gamma_n} \Phi(|\nabla u_n|) dx,
\end{equation}
or equivalently,  
$$
-(r\lambda_n/2)(M+o_n(1))+R\lambda_n\int_{\Gamma_n} \Phi(|\nabla u_n|) dx\geq0.
$$ 
Thus 
$$
\int_{\Gamma_n}\Phi(|\nabla u_n|) dx \geq \frac{rM}{2R}-o_n(1),
$$ 
which contradicts $(\ref{41})$.
\end{proof}

In what follows, let us denote  by $b_\lambda$ the mountain pass level of the energy functional $I_{\lambda,B}: W_0^{1,\Phi}(B_{\lambda r}) \to \mathbb{R}$ given by
$$
I_{\lambda,B}(u)=\int_{B_{\lambda r}} \Phi(|\nabla u|)\, dx+\int_{B_{\lambda r}} \Phi(|u|)\, dx-\int_{B_{\lambda r}} F(u)\, dx,
$$
where $B_{\lambda r}=\lambda B_r(0)$ and by $\mathcal{M}_{\lambda,B}$ the Nehari manifold related to the $I_{\lambda,B}$ given by 
$$
\mathcal{M}_{\lambda,B}=\{u \in W_0^{1,\Phi}(B_{\lambda r}) \setminus \{0\}\,:\,I'_{\lambda,B}(u)u=0\}.
$$
Repeating the same arguments explored in \cite{JMP2014}, it is possible to prove that 
$$
b_\lambda=\inf_{u \in \mathcal{M}_{\lambda,B}}I_{\lambda,B}(u).
$$

The next result will be used to study the behavior of barycenter of some sequences, which will appear in the proof of Proposition \ref{P3.2} below.

\begin{proposition}\label{4.2} The numbers $b_{\lambda}$ and $c_{\lambda}$ verify the following limits $$\lim_{\lambda\rightarrow\infty} c_{\lambda}=c_{\infty}~\mbox{and}~ \lim_{\lambda\rightarrow\infty} b_{\lambda}=c_{\infty}.$$
\end{proposition}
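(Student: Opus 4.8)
The plan is to squeeze both $c_\lambda$ and $b_\lambda$ between the constant $c_\infty$ from below and a quantity converging to $c_\infty$ from above. For the lower bounds I would use that extension by zero embeds $W_0^{1,\Phi}(\Omega_\lambda)$ and $W_0^{1,\Phi}(B_{\lambda r})$ into $W^{1,\Phi}(\mathbb{R}^N)$, and that under this embedding $I_\lambda$ and $I_{\lambda,B}$ are merely restrictions of $I_\infty$, while the Nehari constraint $I'(u)u=0$ is preserved. Hence, after extension, $\mathcal{M}_\lambda\subset\mathcal{M}_\infty$ and $\mathcal{M}_{\lambda,B}\subset\mathcal{M}_\infty$, so $c_\lambda=\inf_{\mathcal{M}_\lambda}I_\lambda\ge\inf_{\mathcal{M}_\infty}I_\infty=c_\infty$ and likewise $b_\lambda\ge c_\infty$, for every $\lambda$; in particular $\liminf_{\lambda\to\infty}c_\lambda\ge c_\infty$ and $\liminf_{\lambda\to\infty}b_\lambda\ge c_\infty$. (The same monotonicity with respect to the domain also gives $c_\lambda\le b_\lambda$ whenever $B_{\lambda r}\subset\Omega_\lambda$, although this will not be needed.)

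For the upper bounds I would exhibit test functions whose Nehari values approach $c_\infty$. Start from the positive ground state $w\in W^{1,\Phi}(\mathbb{R}^N)$ of $I_\infty$ furnished by \cite{JMP2014}, so $I_\infty(w)=c_\infty$ and $I'_\infty(w)=0$. Pick a cut-off $\psi_R\in C_0^\infty(B_{2R}(0))$ with $0\le\psi_R\le1$, $\psi_R\equiv1$ on $B_R(0)$, $|\nabla\psi_R|\le C/R$, and set $w_R=\psi_R w$. Using the $\Delta_2$-condition on $\Phi$ and $\widetilde{\Phi}$ (equivalently $(\phi_2)$) one shows $w_R\to w$ in $W^{1,\Phi}(\mathbb{R}^N)$ as $R\to\infty$. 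By $(\phi_1)-(\phi_3)$ and $(f_2)-(f_3)$, the fibering map $t\mapsto I_\infty(tw_R)$ is positive for small $t>0$, diverges to $-\infty$ as $t\to\infty$, and has a unique maximizer $t_R>0$ with $t_R w_R\in\mathcal{M}_\infty$. Since $w_R$ is supported in $B_{2R}(0)$, and $B_{2R}(0)\subset B_{\lambda r}$ as well as $B_{2R}(0)\subset\Omega_\lambda$ once $\lambda>2R/r$ (recall $0\in\Omega$), the functionals $I_\infty$, $I_{\lambda,B}$ and $I_\lambda$ all coincide on the half-line $\{t w_R:t\ge0\}$; therefore $t_R w_R\in\mathcal{M}_{\lambda,B}\cap\mathcal{M}_\lambda$ and, for every $\lambda>2R/r$,
\[
c_\lambda\le I_\lambda(t_R w_R)=I_\infty(t_R w_R)=\max_{t\ge0}I_\infty(t w_R),\qquad b_\lambda\le I_{\lambda,B}(t_R w_R)=\max_{t\ge0}I_\infty(t w_R).
\]

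It then remains to show $\max_{t\ge0}I_\infty(t w_R)\to c_\infty$ as $R\to\infty$. The maximizers $t_R$ are bounded: plugging $t_R w_R\in\mathcal{M}_\infty$ into $(\phi_2)$ bounds the gradient and zero-order $\Phi$-terms from above by $C t_R^{m}$, while $(f_2)$, which makes $s\mapsto F(s)/s^{\theta}$ nondecreasing, bounds $\int f(t_R w_R)t_R w_R$ from below by $c\,t_R^{\theta}\int F(w_R)$ with $\int F(w_R)\to\int F(w)>0$; since $\theta>m$ this forces $t_R\le T$ for $R$ large. Passing to a subsequence $t_R\to t_*\in[0,T]$, the boundedness of $t_R$ together with $w_R\to w$ gives $t_R w_R\to t_* w$ and $t_R w\to t_* w$ in $W^{1,\Phi}(\mathbb{R}^N)$, hence $I_\infty(t_R w_R)-I_\infty(t_R w)\to0$; since $w\in\mathcal{M}_\infty$, the fibering map along $w$ attains its maximum at $t=1$, so $I_\infty(t_R w)\le\max_{t\ge0}I_\infty(t w)=I_\infty(w)=c_\infty$. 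Consequently $\limsup_{R\to\infty}\max_{t\ge0}I_\infty(t w_R)\le c_\infty$, and combining with the displayed inequalities yields $\limsup_{\lambda\to\infty}c_\lambda\le c_\infty$ and $\limsup_{\lambda\to\infty}b_\lambda\le c_\infty$. Together with the lower bounds of the first step, this gives $\lim_{\lambda\to\infty}c_\lambda=\lim_{\lambda\to\infty}b_\lambda=c_\infty$.

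The genuinely technical points --- and the place where most of the work sits --- are the norm convergence $w_R\to w$ in the Orlicz-Sobolev space $W^{1,\Phi}(\mathbb{R}^N)$ and the uniform boundedness of the Nehari projection $w_R\mapsto t_R$; both are routine adaptations of estimates already carried out in \cite{JMP2014} and \cite{TMNA2014}, and rely on the $\Delta_2$-condition $(\phi_2)$ together with the structural hypotheses $(\phi_1)-(\phi_5)$ and $(f_1)-(f_3)$.
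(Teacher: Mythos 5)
Your proposal is correct and follows essentially the same route as the paper: the trivial lower bound $c_\lambda, b_\lambda \geq c_\infty$ via zero extension, plus the upper bound obtained by truncating the ground state $w$ of $(P_\infty)$ to $w_R$, projecting onto the Nehari manifold via $t_R$, and showing $(t_R)$ is bounded using $(\phi_2)$ and $(f_2)$. The only (harmless) divergence is in the endgame: the paper pins down $\lim_{R\to\infty} t_R = 1$ using $(\phi_3)$ and $(f_3)$ and concludes $I_\infty(t_R w_R) \to I_\infty(w)$, whereas you bypass this by comparing $I_\infty(t_R w_R)$ with $I_\infty(t_R w) \leq \max_{t\geq 0} I_\infty(tw) = c_\infty$, which rests on the same fibering-map facts.
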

\begin{proof}
We will prove only the first limit, because the second one follows with the same arguments. 
Let $h \in C^{\infty}_0(\mathbb{R}^N,[0,1])$ with  
$$
h(x)=\left\{
\begin{array}{l}
1,~~\text{in}~ B_1(0),\\ 
0,~~\text{in}~ B^c_2(0). \\
\end{array}
\right.
$$
For each $R>0$, let us consider the function $h_{R}(x)=h(x/R)$ and $w_{R}(x)=h_{R}(x) w(x),$ where $w$ is a ground state solution of $(P_{\infty}).$ Since $0\in \Omega,$ there exists $\lambda^{\star}>0$ such that $B_{2R}(0)\subset \Omega_{\lambda}$ for $\lambda\geq \lambda^{\star}.$ Let $t_{R}>0$ satisfy $t_R w_R\in \mathcal{M}_{\lambda}$. Then 
$$
c_{\lambda}\leq I_{\lambda}(t_{R}w_{R}),~ \forall \lambda\geq \lambda^{\star}.
$$ 
Taking the limit when $\lambda\rightarrow\infty,$ we obtain 
$$
\limsup_{\lambda\rightarrow\infty} c_{\lambda}\leq I_{\infty}(t_{R}w_{R}).
$$

\noindent \textbf{Claim 1:} $\displaystyle \lim_{R\rightarrow\infty}t_R=1.$

By definition of $t_R$, 
$$
t_Rw_R\in \mathcal{M}_{\lambda}\Longleftrightarrow I'_{\infty}(t_Rw_R)(t_Rw_R)=0,
$$ 
or equivalently,
\begin{equation} \label{2}
\int_{\mathbb{R}^N}\phi(t_R|\nabla w_R|)(t_R|\nabla w_R|)^2dx+\int_{\mathbb{R}^N}\phi(t_R|w_R|)(t_R|w_R|)^2 dx=\int_{\mathbb{R}^N}f(t_Rw_R)(t_Rw_R) dx.
\end{equation}
So, for $R>1$, 
$$
\int_{\mathbb{R}^N}\phi(t_R|\nabla w_R|)(t_R|\nabla w_R|)^2 dx+\int_{\mathbb{R}^N}\phi(t_R|w_R|)(t_R|w_R|)^2dx\geq \int_{B_1(0)}f(t_Rw)(t_Rw)dx\geq\int_{B_1(0)} f(t_R a)(t_R a)dx
$$
where $a=\displaystyle\min_{|x|\leq1} w(x).$

Now, gathering $(\phi_2)$ and \protect{\cite[lemma 2.3]{TMNA2014}}, we derive that 
\begin{align*}
\int_{\mathbb{R}^N} \phi(t_R|\nabla w_R|)(t_R|\nabla w_R|)^2 dx+\int_{\mathbb{R}^N} \phi(t_R| w_R|)(t_R|w_R|)^2 dx&\leq\int_{\mathbb{R}^N} m \Phi(t_R |\nabla w_R|) dx+\int_{\mathbb{R}^N} m\Phi(t_R|w_R|) dx\\
&=m\int_{\mathbb{R}^N} \Phi(t_R|\nabla w_R|)+\Phi(t_R|w_R|) dx\\
&\leq m\xi_1(t_R) \int_{\mathbb{R}^N}\Phi(|\nabla w_R|)+\Phi(|w_R|) dx,
\end{align*} 
and so
$$
\int_{\mathbb{R}^N}\Phi(|\nabla w_R|)+\Phi(|w_R|) dx\geq \frac{1}{m\xi_1(t_R)} \int_{B_1(0)} f(t_R a) t_R a\,dx,
$$ 
where $\xi_1(t_R)=\max\{t_R^l,t_R^m\}.$ Using the above information, we are able to prove that $(t_R)$ is bounded. In fact, if there exists $R_n\rightarrow \infty$ with $t_{R_n}\rightarrow\infty, $ we ensure that $\xi_1(t_{R_n}) =t_{R_n}^m$ (because $m>l$), then 
\begin{equation}\label{1}
\int_{\mathbb{R}^N} \Phi(|\nabla w_{R_n}|)+\Phi(|w_{R_n}|) dx\geq \frac{1}{m t_{R_n}^m}\int_{B_1(0)} f(t_{R_n} a) t_{R_n} a \,dx.
\end{equation} 
Thereby, by $(f_2)$,  
$$
\int_{\mathbb{R}^N} (\Phi(|\nabla w_{R_n}|)+\Phi(|w_{R_n}|) )dx\geq \frac{\theta F(t_0)}{mt_0^{\theta}}\int_{B_1(0)} t^{\theta-m}_{{R_n}} a^{\theta} dx.
$$ 
As $\theta>m,$ 
$$
t_{R_n}^{\theta-m}\overset{n\rightarrow\infty}{\longrightarrow}+\infty,
$$
which yields 
$$
\int_{\mathbb{R}^N}\Phi(|\nabla w_{R_n}|)+\Phi(|w_{R_n}|) dx\rightarrow+\infty,
$$   
that is,
$$
||w_{R_{n}}||\rightarrow\infty,
$$ 
which is an absurd, because $||w_{R_n}||\rightarrow ||w||~\text{in}~ W^{1,\Phi}(\mathbb{R}^N)$. Then $(t_{R})$ is bounded. Now, we will show that there is no $R_n\rightarrow +\infty$ such that $\overset{n\rightarrow \infty}{ t_{R_n}\longrightarrow 0}.$  Indeed, from \protect{\cite[lemma 4.1]{JMP2014}}, as $t_{{R_n}}w_{{R_n}}\in \mathcal{M}_{\lambda}$, there exists $\alpha >0$ such that 
$$
||t_{R_n} w_{R_n}||\geq \alpha, ~ \forall n\in \mathbb{N}
$$ 
and then 
$$
t_{R_n}>\frac{\alpha}{||w_{R_n}\|}.
$$ 
Since $||w_{R_n}||\rightarrow||w||$, we conclude that 
$$
\liminf_{n\rightarrow+\infty} t_{R_n} >0.
$$
Therefore, there exist $R_0,\delta>0$ such that $t_{R}>\delta$ for $R\geq R_0.$
Fixing $R_n \rightarrow +\infty$ with  $t_{R_n}\rightarrow t_0$, it follows from $(\ref{2})$  
\begin{equation}\label{A}
\int_{\mathbb{R}^N}\phi(t_0|\nabla w|)(t_0|\nabla w|)^2dx+\int_{\mathbb{R}^N}\phi(t_0|w|)(t_0|w|)^2 dx=\int_{\mathbb{R}^N}f(t_0 w)(t_0 w) dx.
\end{equation}
By $(\phi_3)$ and $(f_3)$, it is easy to check that $t_0=1.$ Consequently, 
$$
I_{\infty}(t_R w_R)\rightarrow I_{\infty}(w)=c_{\infty}~\text{when}~ R\rightarrow +\infty,
$$ 
and
\begin{equation}\label{44}
\limsup_{\lambda\rightarrow\infty} c_{\lambda}\leq c_{\infty}.
\end{equation}
On the other hand, from the definition of $c_{\lambda}$ and $c_{\infty}$, we get the inequality  
$$
c_{\lambda}\geq c_{\infty}, \quad \forall \lambda>0,
$$
which leads to 
\begin{equation}\label{45}
\liminf_{\lambda\rightarrow +\infty} c_{\lambda}\geq c_{\infty}.
\end{equation}
From $(\ref{44})$ and $(\ref{45})$, 
$$
\lim_{\lambda\rightarrow +\infty} c_{\lambda}=c_{\infty}.
$$
\end{proof}

The proposition below is crucial to apply the Lusternik - Schnirelman Theory.
\begin{proposition} \label{P3.2}
There exists $\widehat{\lambda}>0$ such that : $$I_{\lambda}(u)\leq b_{\lambda}~\text{and}~ u\in \mathcal{M}_{\lambda}\Rightarrow \beta(u)\in \lambda\Omega^+_r,~\forall \lambda\geq \widehat{\lambda}.$$
\end{proposition}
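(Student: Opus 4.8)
The plan is a concentration--compactness argument by contradiction, built on Theorem~\ref{310} and Proposition~\ref{4.2}. First I would negate the statement: if it fails there exist $\lambda_n\to\infty$ and $u_n\in\mathcal{M}_{\lambda_n}$ with $I_{\lambda_n}(u_n)\le b_{\lambda_n}$ but $\beta(u_n)\notin\lambda_n\Omega^+_r$ (note $\lambda_n\Omega^+_r=\{x:d(x,\lambda_n\overline\Omega)\le\lambda_n r\}$). Since $u_n\in\mathcal{M}_{\lambda_n}$ gives $I_{\lambda_n}(u_n)\ge c_{\lambda_n}$, we have $c_{\lambda_n}\le I_{\lambda_n}(u_n)\le b_{\lambda_n}$, and Proposition~\ref{4.2} forces $I_{\lambda_n}(u_n)\to c_\infty$. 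Extending each $u_n$ by zero outside $\Omega_{\lambda_n}$ (still denoted $u_n$) yields $u_n\in W^{1,\Phi}(\mathbb{R}^N)\cap\mathcal{M}_\infty$ with $I_\infty(u_n)=I_{\lambda_n}(u_n)\to c_\infty$, so Theorem~\ref{310} applies: passing to a subsequence there are $(y_n)\subset\mathbb{R}^N$ (take $y_n\equiv0$ in alternative $(i)$) and a ground state $v$ of $(P_\infty)$ such that $v_n:=u_n(\cdot+y_n)\to v$ strongly in $W^{1,\Phi}(\mathbb{R}^N)$; in particular $M_n:=\int_{\mathbb{R}^N}\Phi(|\nabla u_n|)\,dx=\int_{\mathbb{R}^N}\Phi(|\nabla v_n|)\,dx\to M:=\int_{\mathbb{R}^N}\Phi(|\nabla v|)\,dx>0$.

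Two ingredients are then needed. First, $y_n$ stays within bounded distance of $\lambda_n\overline\Omega$: since $\mathrm{supp}\,v_n\subset\lambda_n\overline\Omega-y_n$ and $v\ne0$, if $d(y_n,\lambda_n\overline\Omega)\to\infty$ along a subsequence then $v_n\equiv0$ on every fixed ball $B_\rho(0)$ for $n$ large, hence $v\equiv0$ on each $B_\rho(0)$, contradicting $v\ne0$; so there are $C_0>0$ and $p_n\in\lambda_n\overline\Omega$ with $|y_n-p_n|\le C_0$. Second, a uniform tail bound for the gradient modular: convexity of $\Phi$ together with $\Phi\in\Delta_2$ gives $\Phi(|\nabla v_n|)\le\frac{K}{2}\Phi(|\nabla(v_n-v)|)+\frac{K}{2}\Phi(|\nabla v|)$ for a constant $K>0$, so given $\varepsilon>0$ one chooses first $\rho$ with $\frac{K}{2}\int_{\{|z|>\rho\}}\Phi(|\nabla v|)\,dz<\varepsilon/2$ and then $n_0$ with $\frac{K}{2}\int_{\mathbb{R}^N}\Phi(|\nabla(v_n-v)|)\,dz<\varepsilon/2$ for $n\ge n_0$, whence
\[
\int_{\{|x-y_n|>\rho\}}\Phi(|\nabla u_n|)\,dx<\varepsilon\qquad\text{for all }n\ge n_0 .
\]

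Finally I would estimate the barycenter by splitting $\beta(u_n)-y_n=\frac{1}{M_n}\int_{\mathbb{R}^N}(x-y_n)\Phi(|\nabla u_n|)\,dx$ over $\{|x-y_n|\le\rho\}$ and $\{|x-y_n|>\rho\}$: the first part has Euclidean norm at most $\rho M_n$, while on the second part $x\in\mathrm{supp}\,u_n\subset\lambda_n\overline\Omega$ gives $|x-y_n|\le|x-p_n|+|p_n-y_n|\le\lambda_n D+C_0$ with $D:=\mathrm{diam}\,\overline\Omega$, so that part has norm at most $(\lambda_n D+C_0)\varepsilon$. Therefore
\[
d(\beta(u_n),\lambda_n\overline\Omega)\le|\beta(u_n)-p_n|\le|\beta(u_n)-y_n|+C_0\le\rho+C_0+\frac{(\lambda_n D+C_0)\varepsilon}{M_n}.
\]
Dividing by $\lambda_n$ and letting $n\to\infty$ (using $M_n\to M$, $\lambda_n\to\infty$), the right-hand side tends to $D\varepsilon/M$; so if at the very start we fix $\varepsilon:=rM/(2D)$, then for $n$ large $d(\beta(u_n),\lambda_n\overline\Omega)<\lambda_n r$, i.e.\ $\beta(u_n)\in\lambda_n\Omega^+_r$, contradicting the choice of $u_n$ and producing the desired $\widehat\lambda$. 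The only genuine obstacle is the clash of scales: the weight $x$ in $\beta$ is unbounded and the mass of $u_n$ may a priori be spread over $\lambda_n\overline\Omega$, a set of diameter $O(\lambda_n)$, so a crude bound on the ``far'' part of $\int x\,\Phi(|\nabla u_n|)\,dx$ is itself of order $\lambda_n$ --- the same order as the tolerance $\lambda_n r$. This is overcome purely by the order of the quantifiers above: $\varepsilon$ is fixed small (in terms of $r$, $\mathrm{diam}\,\Omega$ and the fixed number $M$ attached to the limiting ground state) \emph{before} $\rho$ and $n$ are chosen, and Theorem~\ref{310} is exactly what guarantees that, up to an $\varepsilon$-fraction, the whole gradient modular of $u_n$ has already concentrated inside the fixed ball $B_\rho(y_n)$.
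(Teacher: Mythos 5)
Your proof is correct, but it takes a genuinely different route from the paper's. The paper's own argument is much shorter at this stage because it leans on machinery already built: starting from the same contradiction sequence $\lambda_n\to\infty$, $u_n\in\mathcal{M}_{\lambda_n}$, $I_{\lambda_n}(u_n)\le b_{\lambda_n}$, $x_n=\beta(u_n)\notin\lambda_n\Omega^+_r$, it fixes $R>\mathrm{diam}\,\Omega$ and observes that $\Omega_{\lambda_n}\subset A_{\lambda_nR,\lambda_nr,x_n}$ --- the outer inclusion because $\beta(u_n)$ is an average over $\Omega_{\lambda_n}$, so $|y-x_n|\le\lambda_n\,\mathrm{diam}\,\Omega<\lambda_nR$ for $y\in\Omega_{\lambda_n}$, and the inner exclusion because $x_n\notin\lambda_n\Omega^+_r$ means $d(x_n,\lambda_n\overline\Omega)>\lambda_nr$. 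Hence $u_n$ is admissible for the annulus level $a(R,r,\lambda_n,x_n)=a(R,r,\lambda_n)$, which gives $a(R,r,\lambda_n)\le I_{\lambda_n}(u_n)\le b_{\lambda_n}\to c_\infty$ by Proposition \ref{4.2}, contradicting Proposition \ref{4.1}. You bypass the auxiliary levels $a(R,r,\lambda)$ and Proposition \ref{4.1} entirely: you apply Theorem \ref{310} directly to the zero-extensions of $u_n$ and redo the concentration/barycenter bookkeeping from scratch, arriving at the quantitative bound $d(\beta(u_n),\lambda_n\overline\Omega)\le\rho+C_0+(\lambda_nD+C_0)\varepsilon/M_n$ with $\varepsilon$ fixed before $\rho$ and $n$. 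This is essentially the computation the paper performs once, inside the proof of Proposition \ref{4.1}, for annular domains; your version is self-contained modulo Theorem \ref{310} and Proposition \ref{4.2} and makes the mechanism transparent (the barycenter tracks the concentration point up to $o(\lambda_n)$), while the paper's is shorter here because that work was amortized. Two small points worth making explicit in your write-up: the lower bound $I_{\lambda_n}(u_n)\ge c_\infty$ follows at once from the fact that the zero-extension lies in $\mathcal{M}_\infty$ together with $c_\infty=\inf_{\mathcal{M}_\infty}I_\infty$ (no need to route through $c_{\lambda_n}$); and in alternative $(i)$ of Theorem \ref{310} you should record that the strong limit is nonzero (norms are bounded below on $\mathcal{M}_\infty$), so that $M>0$ holds in both cases.
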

\begin{proof} 
Assume by contradiction that the lemma does not occur. Then, there exist $\lambda_n\rightarrow +\infty, u_n\in\mathcal{M}_{\lambda_n}$ and $I_{\lambda_n}(u_n)\leq b_{\lambda_n}$  such that 
$$
x_n=\beta(u_n)\notin \lambda_n\Omega^+_r.
$$
Fixing $R>diam \Omega,$ we have  
\begin{equation}\label{424}
\Omega_{\lambda_n}\subset A_{\lambda_n R,\lambda_n r,x_n}.
\end{equation}
In fact, for $y\in \Omega_{\lambda_n}$, 
\begin{align*}
|y-x_n|&=\left|\frac{\int_{\Omega_{\lambda_n}}y\Phi(|\nabla u_n|) dz}{\int_{\Omega_{\lambda_n}}\Phi(|\nabla u_n|) dz}-\frac{\int_{\Omega_{\lambda_n}}z\Phi(|\nabla u_n|) dz}{\int_{\Omega_{\lambda_n}}\Phi(|\nabla u_n|) dz}\right|\\&=\left|\frac{\lambda_n\int_{\Omega_{\lambda_n}}(x-\frac{z}{\lambda_n})\Phi(|\nabla u_n|) dz}{\int_{\Omega_{\lambda_n}}\Phi(|\nabla u_n|) dz}\right|\leq \lambda_n diam\Omega\leq \lambda_n R.
\end{align*} 
Then,  
\begin{equation}\label{428}
|x_n-y|\leq R\lambda_n.
\end{equation}
which shows $(\ref{424})$.

By using of the definition of $a(R,r,\lambda_n,x_n)$ and the fact that $a(R,r,\lambda_n,x_n)= a(R,r,\lambda_n),$  we get 
$$ 
a(R,r,\lambda_n)\leq b_{\lambda_n}.
$$ 
Then, by Proposition \ref{4.2},  
$$
\liminf_{n\rightarrow\infty}a(R,r,\lambda_n)\leq c_{\infty},
$$  
which contradicts the Proposition \ref{4.1}.
\end{proof}
\begin{proposition} The functional $I_{\lambda,B}$ has a ground state solution $u_{\lambda,r}$ which is radially symmetric on the origin.
\end{proposition}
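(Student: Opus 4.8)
The plan is to produce a ground state of $I_{\lambda,B}$ by minimizing over the Nehari manifold $\mathcal{M}_{\lambda,B}$, and then to show that such a ground state can be taken radially symmetric by replacing it with its Schwarz symmetrization.

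First I would note that $B_{\lambda r}$ is a smooth bounded domain, so the functional $I_{\lambda,B}$ satisfies exactly the hypotheses used for $I_\lambda$ in Proposition \ref{Pb} and in the proposition following it: in particular $I_{\lambda,B}$ verifies the Palais--Smale condition on $\mathcal{M}_{\lambda,B}$, and any critical point of the restriction $I_{\lambda,B}|_{\mathcal{M}_{\lambda,B}}$ is a nontrivial critical point of $I_{\lambda,B}$ on $W_0^{1,\Phi}(B_{\lambda r})$, which is moreover positive by $(\phi_2)$ together with $f(t)=0$ for $t\le0$. Applying Ekeland's variational principle on $\mathcal{M}_{\lambda,B}$ to the value $b_\lambda=\inf_{\mathcal{M}_{\lambda,B}}I_{\lambda,B}$ produces a minimizing sequence that is a Palais--Smale sequence for $I_{\lambda,B}|_{\mathcal{M}_{\lambda,B}}$; the Palais--Smale condition then gives $u\in\mathcal{M}_{\lambda,B}$, $u>0$, with $I_{\lambda,B}(u)=b_\lambda$ and $I'_{\lambda,B}(u)=0$. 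Thus $I_{\lambda,B}$ admits a positive ground state.

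Next I would symmetrize. Let $u^\ast$ be the Schwarz symmetrization (spherically symmetric decreasing rearrangement) of $u$; since $u>0$ is supported in the ball $B_{\lambda r}$, one has $u^\ast\in W_0^{1,\Phi}(B_{\lambda r})$ and $u^\ast$ is radially symmetric about the origin. I would invoke three facts: the Orlicz version of the P\'olya--Szeg\H{o} inequality, $\int_{B_{\lambda r}}\Phi(|\nabla u^\ast|)\,dx\le\int_{B_{\lambda r}}\Phi(|\nabla u|)\,dx$; equimeasurability, $\int_{B_{\lambda r}}\Phi(|u^\ast|)\,dx=\int_{B_{\lambda r}}\Phi(|u|)\,dx$; and, since $F$ is nondecreasing on $[0,+\infty)$ (indeed $(f_2)$ forces $f(t)>0$ for $t>0$) and $u\ge0$, $\int_{B_{\lambda r}}F(u^\ast)\,dx=\int_{B_{\lambda r}}F(u)\,dx$. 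Because scaling commutes with rearrangement, $(tu)^\ast=tu^\ast$ for all $t\ge0$, so applying the same three facts to $tu$ yields $I_{\lambda,B}(tu^\ast)\le I_{\lambda,B}(tu)$ for every $t\ge0$.

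Finally, the Nehari structure provided by $(\phi_1)$--$(\phi_5)$, $(f_2)$, $(f_3)$ gives a unique $t^\ast>0$ with $t^\ast u^\ast\in\mathcal{M}_{\lambda,B}$, and, since $u\in\mathcal{M}_{\lambda,B}$, the map $t\mapsto I_{\lambda,B}(tu)$ attains its maximum over $[0,+\infty)$ at $t=1$. Hence
$$
b_\lambda\le I_{\lambda,B}(t^\ast u^\ast)\le\max_{t\ge0}I_{\lambda,B}(tu^\ast)\le\max_{t\ge0}I_{\lambda,B}(tu)=I_{\lambda,B}(u)=b_\lambda,
$$
so every inequality is an equality; in particular $t^\ast u^\ast$ achieves $b_\lambda$ on $\mathcal{M}_{\lambda,B}$, hence is a critical point of $I_{\lambda,B}|_{\mathcal{M}_{\lambda,B}}$ and therefore, by the step above, a (positive) critical point of $I_{\lambda,B}$. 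Setting $u_{\lambda,r}:=t^\ast u^\ast$ gives the desired radially symmetric ground state. I expect the main technical point to be the justification of the P\'olya--Szeg\H{o} inequality in the Orlicz--Sobolev setting, i.e.\ that Schwarz symmetrization maps $W_0^{1,\Phi}(B_{\lambda r})$ into itself without increasing $\int\Phi(|\nabla\,\cdot\,|)$; this follows from the coarea formula combined with the classical isoperimetric inequality applied level by level, or can be cited from the literature on rearrangements in Orlicz spaces. A secondary routine point is verifying that the Nehari projection $t\mapsto tv$ is well defined and unimodal for every admissible $v$, which follows from $(\phi_3)$ and $(f_3)$.
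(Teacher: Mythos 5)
Your proposal is correct and follows essentially the same route as the paper: take a positive ground state, apply Schwarz symmetrization with the Orlicz P\'olya--Szeg\H{o} inequality and equimeasurability, project $u^\ast$ onto the Nehari manifold via the unique $t^\ast$, and close the chain of inequalities $b_\lambda\le I_{\lambda,B}(t^\ast u^\ast)\le\max_{t\ge0}I_{\lambda,B}(tu)=b_\lambda$. The only difference is that you spell out the existence of the ground state (Ekeland plus the Palais--Smale condition on $\mathcal{M}_{\lambda,B}$) and the equality of the $\int\Phi(|u^\ast|)$ term, both of which the paper takes for granted.
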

\begin{proof}
Let $v \in W_0^{1,\Phi}(B_{\lambda r})$ be a positive ground state solution for $I_{\lambda,B}$ that is 
$$
I_{\lambda,B}(v)=b_{\lambda}~\text{and}~ I'_{\lambda,B}(v)=0.
$$
If $v^{\star}$ is the Schwartz symmetrization of $v$, the Pólya–Szegö principle ensures that $v^{\star}\in W_0^{1,\Phi}(B_{\lambda r}(0))$ and  
\begin{equation}\label{47}
\int_{B_{\lambda r}(0)} \Phi(|\nabla v^{\star}|) dx\leq \int_{B_{\lambda r}(0)}\Phi(|\nabla v|) dx.
\end{equation}
On the other hand, we also have 
\begin{equation}\label{48}
\int_{B_{\lambda r}(0)} F(\alpha v^*) dx=\int_{B_{\lambda r}(0)} F(\alpha v) dx,~\forall \alpha>0.
\end{equation}
From \protect{\cite[lemma 3.1]{JMP2014}}, there exists a unique $t^*>0$ such that $t^*v^*\in \mathcal{M}_{\lambda,B}.$ Thereby, from $(\ref{47})$ and $(\ref{48})$,  
$$
b_{\lambda}\leq I_{\lambda,B}(t^*v^*)\leq I_{\lambda,B}(t^* v)\leq \max_{t\geq0} I_{\lambda,B}(tv)=I_{\lambda,B}(v)=b_{\lambda},
$$
and so,  
$$
I_{\lambda,B}(t^*v^*)=b_{\lambda}~~\text{and}~~t^*v^*\in\mathcal{M}_{\lambda,B}.
$$
Consequently, $t^*v^*$ is a critical point of $I_{\lambda,B}$ on $\mathcal{M}_{\lambda,B}$, then $u_{\lambda,r}=t^*v^* \in W_0^{1,\Phi}(B_{\lambda,r})$ is radially symmetric on the origin and satisfies  
$$
I_{\lambda,B}(u_{\lambda,r})=b_{\lambda}~\text{and}~ I'_{\lambda,B}(u_{\lambda,r})=0.
$$ 
 
\end{proof}

In the sequel, for each $\lambda>0$ and $r>0$, we define the operator $\Psi_r: \lambda\Omega_-\rightarrow W_0^{1,\Phi}(\Omega_{\lambda})$ by 
$$
[\Psi_r(y)](x)=\begin{cases} u_{\lambda,r}(|x-y|), ~~\forall x\in B_{\lambda r}(y),\\ 0,~~~~~~~~~~~~~~~~~~~\forall x\in \Omega_{\lambda}\setminus B_{\lambda r}(y).\end{cases}
$$

\begin{proposition} \label{cat} For $\lambda\geq \widehat{\lambda},$ we have 
$$ 
cat (I^{b_{\lambda}}_{\lambda})\geq cat(\Omega),
$$
where $I^{b_{\lambda}}_{\lambda}=\{u\in \mathcal{M}_{\lambda}; I_{\lambda}(u) \leq b_{\lambda}\}$.  

\end{proposition}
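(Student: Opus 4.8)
The plan is to construct two continuous maps
$$
\lambda\Omega_-\ \xrightarrow{\ \Psi_r\ }\ I^{b_\lambda}_\lambda\ \xrightarrow{\ \beta\ }\ \lambda\Omega^+_r
$$
whose composition is the inclusion $\lambda\Omega_-\hookrightarrow\lambda\Omega^+_r$, and then to run the standard Lusternik--Schnirelman comparison argument, using that for the fixed small $r$ the sets $\Omega_-$, $\Omega$ and $\Omega^+_r$ are homotopically equivalent (hence so are their dilations by $\lambda$, and the inclusion $\lambda\Omega_-\hookrightarrow\lambda\Omega^+_r$ is a homotopy equivalence).

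First I would verify that $\Psi_r$ takes values in $I^{b_\lambda}_\lambda$. For $y\in\lambda\Omega_-$ one has $B_{\lambda r}(y)\subset\Omega_\lambda$, so $\Psi_r(y)$ is the translate of the radial ground state $u_{\lambda,r}$ of $I_{\lambda,B}$, extended by zero; all integrals defining $I_\lambda$ and $I'_\lambda$ at $\Psi_r(y)$ reduce to integrals over $B_{\lambda r}(y)$, whence $I'_\lambda(\Psi_r(y))\Psi_r(y)=I'_{\lambda,B}(u_{\lambda,r})u_{\lambda,r}=0$ and $I_\lambda(\Psi_r(y))=I_{\lambda,B}(u_{\lambda,r})=b_\lambda$, i.e. $\Psi_r(y)\in\mathcal M_\lambda$ with $I_\lambda(\Psi_r(y))\le b_\lambda$. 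Continuity of $y\mapsto\Psi_r(y)$ into $W^{1,\Phi}_0(\Omega_\lambda)$ follows from continuity of translations in the Orlicz--Sobolev norm together with the fact that $u_{\lambda,r}$ has compact support in $B_{\lambda r}$.

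Second, Proposition \ref{P3.2} says precisely that for $\lambda\ge\widehat\lambda$ the barycenter $\beta$ restricts to a continuous map $I^{b_\lambda}_\lambda\to\lambda\Omega^+_r$. Then I would compute $\beta\circ\Psi_r$: since $u_{\lambda,r}$ is radial about the origin, $|\nabla\Psi_r(y)(x)|=|u_{\lambda,r}'(|x-y|)|$ depends only on $|x-y|$, so $\Phi(|\nabla\Psi_r(y)(\cdot)|)$ is a radial function of $x-y$; the change of variables $x=z+y$ gives
$$
\beta(\Psi_r(y))=y+\frac{\int_{\mathbb{R}^N}z\,\Phi(|u_{\lambda,r}'(|z|)|)\,dz}{\int_{\mathbb{R}^N}\Phi(|u_{\lambda,r}'(|z|)|)\,dz}=y,
$$
because the first moment of a radial function vanishes. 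Hence $\beta\circ\Psi_r=\mathrm{id}_{\lambda\Omega_-}$, so the composition displayed above is the inclusion.

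Finally, the abstract step. If $I^{b_\lambda}_\lambda=A_1\cup\dots\cup A_k$ with each $A_j$ closed and contractible in $I^{b_\lambda}_\lambda$, then $\{\Psi_r^{-1}(A_j)\}_{j=1}^k$ is a closed cover of $\lambda\Omega_-$; composing a contraction of $A_j$ inside $I^{b_\lambda}_\lambda$ with $\beta$ and using $\beta\circ\Psi_r=\mathrm{id}$ shows each $\Psi_r^{-1}(A_j)$ is contractible in $\lambda\Omega^+_r$, so $\mathrm{cat}_{\lambda\Omega^+_r}(\lambda\Omega_-)\le k$. Since $\lambda\Omega_-\hookrightarrow\lambda\Omega^+_r$ is a homotopy equivalence, $\mathrm{cat}_{\lambda\Omega^+_r}(\lambda\Omega_-)=\mathrm{cat}(\lambda\Omega^+_r)=\mathrm{cat}(\Omega)$, and therefore $k\ge\mathrm{cat}(\Omega)$; taking the infimum over such covers yields $\mathrm{cat}(I^{b_\lambda}_\lambda)\ge\mathrm{cat}(\Omega)$. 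The point requiring the most care is the well-definedness and continuity of the two maps — in particular that $\beta$ is continuous on $\mathcal M_\lambda$, where $\int_{\Omega_\lambda}\Phi(|\nabla u|)\,dx$ stays bounded away from $0$, and that $\Psi_r$ is continuous; the homotopy-equivalence input and the covering argument are then routine.
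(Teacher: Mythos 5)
Your proposal is correct and follows essentially the same route as the paper: transport a closed contractible cover of $I^{b_\lambda}_\lambda$ back to $\lambda\Omega_-$ via $\Psi_r$, use $\beta$ (with Proposition \ref{P3.2}) to push the contractions into $\lambda\Omega^+_r$, and conclude via the homotopy equivalence of $\Omega_-$, $\Omega$ and $\Omega^+_r$. You in fact make explicit two steps the paper leaves implicit — that $\Psi_r(y)\in\mathcal M_\lambda$ with $I_\lambda(\Psi_r(y))=b_\lambda$, and that $\beta\circ\Psi_r=\mathrm{id}$ by radial symmetry — both of which are needed for the covering and contractibility claims.
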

\begin{proof}
If we assume that 
$$
I^{b_{\lambda}}_{\lambda}=A_1\cup \ldots \cup A_n,
$$ 
where $A_i, i=1\ldots,n$ are closed and contractible in $I^{b_{\lambda}}_{\lambda}$, then there exists a continuous function $h_j: [0,1]\times A_j\rightarrow I^{b_{\lambda}}_{\lambda}$ such that 
$$
h_j(0,u)=u ~\text{and}~ h_j(1,u)=z_j~\text{for all}~ u\in A_j,
$$ 
where $z_j$ is a fixed element in $A_j.$ Consider $B_j=\Psi^{-1}_r(A_j), 1\leq j\leq n.$ Then $B_j$ are closed and 
$$
\lambda\Omega_-=B_1\cup\ldots\cup B_n.
$$ 
Setting the deformation $g_j:[0,1]\times B_j\rightarrow \lambda \Omega_+$ given by 
$$
g_j(t,y)=\beta(h_j(t,\Psi_r(y))),
$$ 
we conclude that $B_j$ is contractible in $\lambda\Omega_+$, from where it follows that 
$$
cat(\Omega)=cat(\Omega_{\lambda})=cat_{\lambda\Omega_+}(\lambda\Omega_-)\leq n \leq cat (I^{b_{\lambda}}_{\lambda}).
$$

\vspace{0.5 cm}

\noindent {\bf Proof of Theorem \ref{T1} }  First of all, let us recall that $I_{\lambda}$ satisfies the Palais-Smale condition on $\mathcal{M}_\lambda$. Thus, by applying of Lusternik - Schnirelman Theory and Proposition 
\ref{cat}, we assure that $I_\lambda$ on $\mathcal{M}_\lambda$ has at least $cat(\Omega)$ critical points whose energy is less than $b_\lambda$ for $\lambda \geq \widehat{\lambda}$. 
 
\end{proof}

\end{document}